\documentclass{article}      

\usepackage{arxiv}

\usepackage[utf8]{inputenc}
\usepackage[T1]{fontenc}
\usepackage{hyperref}
\usepackage{url} 
\usepackage{amsmath, amsthm, amssymb}
\usepackage{dsfont}
\usepackage{graphicx}
\usepackage{booktabs, enumitem}
\usepackage{float}
\usepackage{doi}
\usepackage[table, dvipsnames,svgnames]{xcolor}
\usepackage[style=numeric-comp ,sorting=nyt, url=true, giveninits=true]{biblatex}

\let\oldrmdefault\rmdefault
\usepackage{LobsterTwo}
\let\rmdefault\oldrmdefault
\undef{\textlozenge}
\usepackage{gfsartemisia}

\newcommand*\e[1]{\ensuremath{\text{\LobsterTwo #1}}}
\renewcommand{\mathcal}[1]{{\e #1}}

\newtheorem{theorem}{Theorem}[section]

\title{Study via Approximate Symmetries of the Emergence of Secondary Flux in the Bevilacqua-Galeão Model}

\date{\today}

\author{\href{https://orcid.org/0000-0002-9216-3864}{\includegraphics[scale=0.06]{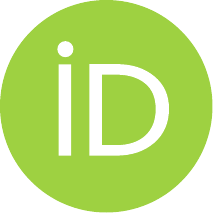}\hspace{1mm}Yuri~Bozhkov}\\
	Institute of Mathematics, Statistics and Scientific Computing\\
	University of Campinas -- UNICAMP\\
	Campinas--SP, Brazil \\
	\texttt{bozhkov@unicamp.br} \\
	\And
	\href{https://orcid.org/0000-0003-2928-2809}{\includegraphics[scale=0.06]{orcid.pdf}\hspace{1mm}Stylianos~Dimas} \\
	Department of Mathematics, Division of Fundamental Sciences\\
	Technological Institute of Aeronautics -- ITA\\
	São José dos Campos--SP, Brazil \\
	\texttt{stylianos.dimas@gp.ita.br} \\
	\And
	\href{https://orcid.org/0000-0002-9616-6093}{\includegraphics[scale=0.06]{orcid.pdf}\hspace{1mm}Antônio J.~Silva Neto} \\
	Polytechnic Institute\\
	Rio de Janeiro State University -- UERJ\\
	Nova Friburgo--RJ, Brazil\\
	\texttt{ajsneto@iprj.uerj.br}
}

\renewcommand{\headeright}{}

\hypersetup{
pdftitle={Study via Approximate Symmetries of the Emergence of Secondary Flux in the Bevilacqua-Galeão Model},
pdfsubject={math.AP},
pdfauthor={Yuri~Bozhkov, Stylianos~Dimas, Antônio J.~Silva Neto},
pdfkeywords={First keyword, Second keyword, More},
}

\begin{document}
\maketitle

\begin{abstract}
	The Bevilacqua-Galeão Model of Anomalous Diffusion introduces two fluxes: a primary flux that follows Fick's law of diffusion, representing the fraction of particles undergoing classical diffusion, and a secondary flux modeled by a fourth-order differential term, which accounts for retention phenomena. We investigate the ``analytic emergence'' of this secondary flux by treating the model as a (singular) perturbation of the heat equation, which describes the classical diffusion. Rather than applying traditional perturbation methods, or a straightforward Fourier transformation, we employ the powerful framework of enhanced modern group analysis to study this problem. Specifically, by utilizing approximate symmetries we derive an analytic expression for the emergent secondary diffusion as a perturbation of the classical diffusion process, the approximate fundamental solution and the approximate solution of the related Cauchy problem.
\end{abstract}

\keywords{Approximate symmetries \and fundamental solutions \and anomalous diffusion model}

\section{Introduction}

Bevilacqua, Galeão and Costa introduced the following PDE in \cite{BevGalCos2011a}
\begin{equation}\label{eq:GoverningPDE3+1}
	p_t(\mathbf x,t) = \nabla\cdot\left(\beta\mathbf D\cdot\nabla p(\mathbf x,t)\right)-\nabla\cdot\left(\beta(1-\beta )\mathbf R\nabla\cdot\left(\nabla\left(\mathbf C\cdot\nabla p(\mathbf x,t)\right)\right)\right),
\end{equation}
where $p$ represents the concentration of a substance at position $\mathbf x\in\mathds R^3$ and time $t$ and $\beta, \mathbf C, \mathbf D, \mathbf R$ are parameters that may depend on the independent and dependent variables. Their goal was to derive the governing equation for \emph{diffusion with retention} phenomena. In this context, the diagonal matrices $\mathbf C, \mathbf D, \mathbf R$, which correspond to the physical components of the model, will be determined by the experimental data, while $0\le\beta\le1$ --- called \emph{redistribution parameter} --- controls the magnitude of the retention effect with $\beta=0$ corresponding to the static model and $\beta=1$ to the classical diffusion-convection model.

Equation~\eqref{eq:GoverningPDE3+1} is an example of a model that describes \emph{anomalous diffusion processes}. Anomalous diffusion processes, which deviate from classical Fickian diffusion, are observed in various complex systems where particle transport is influenced by retention, trapping, or the presence of heterogeneous media. To capture these phenomena, Bevilacqua, Galeão  and Costa extended the traditional diffusion equations by incorporating retention effects through a bimodal flux distribution, that is, by the inclusion of a fourth-order spatial derivative which precisely accounts for the anomalous behaviour. This kind of approach provides a unified framework capable of describing diffusion with partial hold-up or delayed transport, which is relevant in fields ranging from porous media flow to population dynamics and chemical reactions. A few key applications are:
\begin{itemize}
\item Material Science: Analyzing the diffusion of particles or molecules through complex materials with heterogeneous structures.
\item Chemical Engineering: Describing diffusion processes in chemical reactions where substances are temporarily trapped or retained in certain phases.
\item Environmental Science: Studying pollutant transport in porous media, such as soil and groundwater, where retention effects are significant.
\item Population Dynamics: Modeling the spread of populations in environments where individuals may experience retention or delayed movement.
\item Biomedical Applications: Modeling drug delivery systems where active ingredients diffuse through biological tissues with retention properties.
\item Finance: Flux of Capital.
\end{itemize}

It is no surprise that this model has been the subject of extensive study --- particularly through numerical methods --- with special emphasis on the inverse problems associated with parameter estimation and spatial variability, \cite{BevGalCos2011b,BevJiaSil2016a,JiaBevNet2018a}. 

The $1+1$-dimensional version of \eqref{eq:GoverningPDE3+1} assumes the form
\begin{equation}\label{eq:GoverningPDE1+1}
	p_t(x,t) = \frac{\partial}{\partial x}\left(\beta d \frac{\partial p(x,t)}{\partial x}\right) - \frac{\partial}{\partial x}\left[\beta(1-\beta)r\frac{\partial^2 }{\partial x^2}\left(c\frac{\partial p(x,t)}{\partial x}\right)\right].
\end{equation}
When all the parameters are constants, we arrive at the simplest version of the model,
\begin{equation}\label{eq:SimplestGoverningPDE1+1}
	p_t(x,t) = \beta d p_{xx}(x,t) - \beta(1-\beta)rc  p_{xxxx}(x,t).
\end{equation}
In this case, it is easy to identify $d>0$ as the \emph{classical diffusion coefficient} of the harmonic term associated to the primary flux and $rc>0$ as the diffusion coefficient of the biharmonic term controlling the mass transfer time rate associated to the secondary flux. This reduced equation concisely expresses the fundamental dynamics of the elaborate models \eqref{eq:GoverningPDE3+1} and \eqref{eq:GoverningPDE1+1}, thus permitting an undemanding preliminary analysis and resolution in various contexts.

By the relatively simple change of scale,
\begin{align}\label{eq:ChangeOfScale1}
	\bar{x}= \sqrt{\frac{d}{1-\beta}}\ x,&&  \bar{t} =\frac{d^2 \beta}{1-\beta}t, && u(\bar{x},\bar{t})=p(x,t),
\end{align}
from equation~\eqref{eq:SimplestGoverningPDE1+1} we arrive, after dropping the bars, to the PDE
\begin{equation}\label{eq:PerturbedEq}
	u_t=u_{xx} - \varepsilon\,u_{xxxx},
\end{equation}
where $\varepsilon=rc$.

The objective of this work is to express analytically the secondary flux due to retention by considering that $\varepsilon\ll1$, that is, by considering equation \eqref{eq:PerturbedEq} as a (singular) perturbation of the heat equation,
\begin{equation}\label{eq:Heat}
	u_t=u_{xx}.
\end{equation}
Specifically, we shall look for the \emph{fundamental approximate solution} of~\eqref{eq:PerturbedEq}, which will be of the form
$$
	\upsilon_h(x,t)+\varepsilon\,\upsilon_\varepsilon(x,t),
$$
hence separating the two fluxes as desired.

For this study we shall not use standard perturbation methods or the Fourier transformation but  techniques from the \emph{enhanced modern group analysis}. Enhanced modern group analysis is a mathematical framework that extends Sophus Lie's notion of a Lie point symmetry, and its algorithmic way of obtaining them, to different contexts, thus enlarging its usefulness. In our case, the main method used will be the \emph{approximate symmetries} --- first conceived and developed by Baikov, Gazizov, and Ibragimov, \cite{Ibr2008a} --- intended for studying the symmetry properties of differential equations that are perturbed by small parameters. 

We choose equation~\eqref{eq:PerturbedEq} because it is easy to verify that our  method will give the same result found using the Fourier transformation. To solidify our proposed method we go one step forward: we consider the model that corresponds to $rc=\varepsilon x^2$, that is, the PDE
\begin{equation}\label{eq:PerturbedEq2}
	u_t=u_{xx} - \varepsilon\,x^2 u_{xxxx},
\end{equation}
a model that although remains linear becomes quite challenging for the Fourier transform.

The rest of the article is structured as follows, in section $2$ we give a brief account of the different methods from modern group analysis employed --- adapted to the equation at hand. Section $3$ contains our main results, that is, the fundamental approximate solution for equations~\eqref{eq:PerturbedEq} and \eqref{eq:PerturbedEq2} and their approximate solutions for any Cauchy problem that involves them. We dedicate Section $4$ to the comparison of two particular Cauchy problems with their respective numerical solutions. Finally, in Section $5$,  we present our concluding thoughts on the methodology  and possible future research directions.

\section{Methodology}

In  the subsections that follow we give a brief account of the main tools used throughout this work by exposing the relevant parts of the symmetry theory suitably adapted for the two equations, \eqref{eq:Heat} and \eqref{eq:PerturbedEq}, under study.

\subsection{Lie Point Symmetries}\label{sec:LiPointSymmetries}

In the heart of our analysis resides the concept of \emph{symmetry}. Symmetry, loosely put, is a transformation --- a \emph{diffeomorphism}, to be precise --- between elements of a differential equation that leaves it invariant. These symmetries form a \emph{group}, and for our purposes we will restrict ourselves to its connected component, which constitutes a \emph{Lie group}. Practically, this means that the kind of symmetries that we shall use will depend on a continuous variable, $\epsilon$, and we shall identify the symmetry that corresponds to $\epsilon=0$ with the identity transformation --- which is the (trivial) symmetry that any differential equation admits. But this is not the only reason for making this restriction: we can now elegantly represent any such symmetry with an element of a \emph{Lie algebra}, that is, with a special type of \emph{vector space}.

There is one more restriction to make: we shall assume that the transformations, our symmetries, involve only the independent and dependent variables. That is, the group of one parameter transformations, for the study of \eqref{eq:Heat}, takes the form
\begin{align*}
	\bar x =& \bar x(x,t,u;\epsilon),\\
	\bar t =& \bar t(x,t,u;\epsilon),\\
	\bar u =& \bar u(x,t,u;\epsilon).
\end{align*}
In other words, we shall deal with \emph{Lie point symmetries}, and going to the Lie algebra level, we shall represent these symmetries with vectors of the form
\begin{equation}\label{eq:LiePointSymmetry}
	\mathcal{X} = \xi^1(x,t,u)\frac{\partial }{\partial x}+\xi^2(x,t,u)\frac{\partial }{\partial t} + \eta(x,t,u) \frac{\partial }{\partial u}. 
\end{equation}
where
\begin{align*}
	 \xi^1(x,t,u) = \left.\frac{\partial}{\partial\epsilon}\bar x(x,t,u;\epsilon)\right\rvert_{\epsilon=0}, && \xi^2(x,t,u) = \left.\frac{\partial}{\partial\epsilon}\bar t(x,t,u;\epsilon)\right\rvert_{\epsilon=0}
   \intertext{and} 
   \eta(x,t,u) = \left.\frac{\partial}{\partial\epsilon}\bar u(x,t,u;\epsilon)\right\rvert_{\epsilon=0}.
\end{align*}
The inverse direction is also --- at least locally --- true. Having a symmetry as an element of a Lie algebra of the form \eqref{eq:LiePointSymmetry} one can retrieve the corresponding continuous transformation as an element of a (local) Lie group by solving the following Cauchy problem
$$
	\begin{cases} 
		\dfrac{\mathrm{d}}{\mathrm{d}\epsilon}\bar x(\epsilon) = \xi^1(\bar x,\bar t, \bar u),& \bar x(0) = x,\\[10pt]
		\dfrac{\mathrm{d}}{\mathrm{d}\epsilon}\bar t(\epsilon) = \xi^2(\bar x,\bar t,\bar u),& \bar t(0) = t,\\[10pt]\dfrac{\mathrm{d}}{\mathrm{d}\epsilon}\bar u(\epsilon) = \eta(\bar x,\bar t, \bar u),& \bar u(0) = u,
	\end{cases}
$$
for this reason $Mathcal X$ is also called \emph{infinitesimal generator}. We call this procedure \emph{exponentiation}.

But how we shall obtain the symmetries of a differential equation? This is where the ingenious idea of Sophus Lie comes into the light. Since a Lie algebra is a \emph{linearization} of a Lie group around the identity element --- the tangent space at the identity --- the same happens for the underlining problem. That is, instead of the messy and nonlinear problem of finding transformations that keep invariant a differential equation we can \emph{linearize} the task and look for vectors $\mathcal X$ that its coefficients satisfy a set of linear differential equations. Indeed, under certain conditions that the differential equation must satisfy --- and for the cases that we shall study there are indeed satisfied, the vector $\mathcal X$ is a symmetry of \eqref{eq:Heat} if, and only if, it satisfies the elegant and linear symmetry condition:
\begin{equation}\label{eq:LinSymCond}
	\mathcal{X}^{(2)}[u_t-u_{xx}]\Big\rvert_{u_t=u_{xx}} =0, 
\end{equation}
where $\mathcal{X}^{(2)}$ is the second \emph{prolongation} of the vector field $\mathcal X$,
\begin{equation}\label{eq:SecondPrologation}
	\mathcal{X}^{(2)} = \mathcal{X} + \eta^{(1,0)}(x,t,u) \frac{\partial }{\partial u_x}+ \eta^{(0,1)}(x,t,u) \frac{\partial }{\partial u_t}+ \eta^{(2,0)}(x,t,u) \frac{\partial }{\partial u_{xx}} + \eta^{(1,1)}(x,t,u) \frac{\partial }{\partial u_{xt}}+ \eta^{(0,2)}(x,t,u) \frac{\partial }{\partial u_{tt}},
\end{equation}
and
\begin{align*}
	\eta^{(1,0)}(x,t,u) =& D_x\left(\eta(x,t,u)-\xi^1(x,t,u)u_x-\xi^2(x,t,u)u_t \right) +\xi^1(x,t,u)u_{xx}+\xi^2(x,t,u)u_{xt},\\ 
	\eta^{(0,1)}(x,t,u) =& D_t\left(\eta(x,t,u)-\xi^1(x,t,u)u_x-\xi^2(x,t,u)u_t \right) +\xi^1(x,t,u)u_{tx}+\xi^2(x,t,u)u_{tt},\\ 
	\eta^{(2,0)}(x,t,u) =& D_x^2\left(\eta(x,t,u)-\xi^1(x,t,u)u_x-\xi^2(x,t,u)u_t \right) +\xi^1(x,t,u)u_{xxx}+\xi^2(x,t,u)u_{xxt},\\ 
	\eta^{(1,1)}(x,t,u) =& D_xD_t\left(\eta(x,t,u)-\xi^1(x,t,u)u_x-\xi^2(x,t,u)u_t \right) +\xi^1(x,t,u)u_{xxt}+\xi^2(x,t,u)u_{xtt},\\
	\eta^{(0,2)}(x,t,u) =& D_t^2\left(\eta(x,t,u)-\xi^1(x,t,u)u_x-\xi^2(x,t,u)u_t \right) +\xi^1(x,t,u)u_{xtt}+\xi^2(x,t,u)u_{ttt},
\end{align*}
with $D_x, D_t$ denoting the total derivative with respect to $x$ and $t$, respectively. 

Considering the linear symmetry condition\eqref{eq:LinSymCond} as a multivariable polynomial of the form $\mathcal{P}(u_x, u_t, u_{xx}, u_{xt}, u_{tt})$, \eqref{eq:LinSymCond} breaks down to an overdetermined --- in our case --- system of (linear) partial differential equations which is called the \emph{determining equations}. Its general solution will provide us the Lie algebra of Lie point symmetries of \eqref{eq:Heat}, $\mathcal L$\ . The process is quite algorithmic and and usually keyed as the \emph{Lie algorithm}. 

It is a very well-known fact, \cite[p.~118]{Olver2k}, that the infinite dimensional Lie algebra of Lie point symmetries of the heat equation \eqref{eq:Heat} is spanned by the vectors, 
\begin{equation}\label{Heat:LieAlgebra}
\begin{split}
	\mathcal{X}_1 =& \partial_x,\\
	\mathcal{X}_2 =& \partial_t,\\
	\mathcal{X}_3 =& u\partial_u,\\
	\mathcal{X}_4 =& x\partial_x+2t\partial_t,\\
	\mathcal{X}_5 =& 2t\partial_x-xu\partial_u,\\
	\mathcal{X}_6 =& 4xt\partial_x+4t^2\partial_t-(x^2+2t)u\partial_u,\\
	\mathcal{X}_f =& f(x,t)\partial_u,	
\end{split}
\end{equation}
where $f(x,t)$ is any solution of \eqref{eq:Heat}. It is worth mentioning here that the infinite subalgebra --- an ideal actually --- $\mathcal{X}_f$ merely reflects the linearity of \eqref{eq:Heat}. This is a first hint that symmetries can capture important structural properties of differential equations. 

Finding the Lie point symmetries of a differential equation is just the first step in our journey through the astounding landscape illuminated by the ideia of symmetry. In the two subsections that follow we explore one generalization and one specialization  of Lie point symmetries relevant to our work. Next, we see a practical application of the  Lie point symmetries of the heat equation \eqref{eq:Heat}: constructing its fundamental solution, a key element of our study.

\subsubsection{The fundamental solution of \eqref{eq:Heat}}

We define as the \emph{fundamental solution} of the heat equation \eqref{eq:Heat} the solution of the particular \emph{Cauchy problem}
\begin{equation}\label{eq:FundamentalProblem}
	\begin{cases}
	\upsilon_t = \upsilon_{xx},\ -\infty<x<\infty,\, t>0,\\
	\upsilon(x,0) = \delta(x), \ -\infty<x<\infty.
	\end{cases}	
\end{equation}
Its knowledge permit us to express the solution of any other Cauchy problem for the heat equation with inicial data $\phi(x)$ as the convolution between the initial data and the fundamental solution, that is the function
$$
	u(x,t) = \upsilon(x,t)*\phi(x)
$$
satisfies the Cauchy problem
 $$
	\begin{cases}
	u_t = u_{xx},\ -\infty<x<\infty,\, t>0,\\
	u(x,0) = \phi(x), \ -\infty<x<\infty.
	\end{cases}
$$
Having the Lie algebra of point symmetries of equation~\eqref{eq:Heat} allows us to obtain its fundamental solution: all we need is to find the Lie subalgebra that keeps invariant the problema \eqref{eq:FundamentalProblem}. It is spanned by the vectors
\begin{align*}
	\mathcal{X}_5, && \mathcal{X}_4 - \mathcal{X}_3, && \mathcal{X}_6,
\end{align*}
see also \cite[p. $313$]{Ibr2009b}. The \emph{invariant solution} of these three vectors
\begin{align*}
	\mathcal{X}_5\left[u-\upsilon(x,t)\right]_{u=\upsilon(x,t)}=0,\\
	\left(\mathcal{X}_4 - \mathcal{X}_3\right)\left[u-\upsilon(x,t)\right]_{u=\upsilon(x,t)}=0,\\
	\mathcal{X}_6\left[u-\upsilon(x,t)\right]_{u=\upsilon(x,t)}=0,
\end{align*}
is the  fundamental solution that we are seeking --- none other than the well-known Gaussian function
$$
\upsilon(x,t) = \frac{e^{-\dfrac{x^2}{4 t}}}{2\sqrt{\pi t}}.
$$

\subsection{Generalized symmetries}\label{sec:GeneralizedSymmetries}

One can try to extend the algorithmic procedure described above to transformations of the form
\begin{align*}
	\bar x =& \bar x(x,t,u,u_x,u_t;\epsilon),\\
	\bar t =& \bar t(x,t,u,u_x,u_t;\epsilon),\\
	\bar u =& \bar u(x,t,u,u_x,u_t;\epsilon),\\
	\bar u_{\bar x} =& \phi(x,t,u,u_x,u_t;\epsilon),\\
	\bar u_{\bar t} =& \psi(x,t,u,u_x,u_t;\epsilon).
\end{align*}
That is, we now want to dictate the way that the first partial derivatives of $u$ change as well. Its representation as a infinitesimal generator is now
\begin{align*}
	\mathcal{X} =& \xi^1(x,t,u,u_x,u_t)\frac{\partial }{\partial x}+\xi^2(x,t,u,u_x,u_t)\frac{\partial }{\partial t} + \eta(x,t,u,u_x,u_t) \frac{\partial }{\partial u}+\eta_x(x,t,u,u_x,u_t) \frac{\partial }{\partial u_x}+\eta_t(x,t,u,u_x,u_t) \frac{\partial }{\partial u_t}. 
\end{align*}
Observe that we can determine the coefficients $\eta_x$ and $\eta_t$ in two different ways: either by using the functions $\phi$ and $\psi$ or by the prolongation formula that determine $\eta^{(1,0)}$ and $\eta^{(0,1)}$:
\begin{align*}
	\eta^{(1,0)}(x,t,u,u_x,u_t) =& D_x\left(\eta(x,t,u,u_x,u_t)-\xi^1(x,t,u,u_x,u_t)u_x\right)-D_x\left(\xi^2(x,t,u,u_x,u_t)u_t \right) +\xi^1(x,t,u,u_x,u_t)u_{xx}\\
	&+\xi^2(x,t,u,u_x,u_t)u_{xt},\\ 
	\eta^{(0,1)}(x,t,u,u_x,u_t) =& D_t\left(\eta(x,t,u,u_x,u_t)-\xi^1(x,t,u,u_x,u_t)u_x\right)-D_x\left(\xi^2(x,t,u,u_x,u_t)u_t \right) +\xi^1(x,t,u,u_x,u_t)u_{tx}\\
	&+\xi^2(x,t,u,u_x,u_t)u_{tt}.
\end{align*}
However, at this point an inconsistency arise: from the prolongation formula the two coefficients depend on the second order derivatives while the functions $\phi$ and $\phi$ do not! That means that in general no symmetries of this type exists. Only if we impose the adicional conditions, 
\begin{align*}
	\xi^1=-\frac{\partial}{\partial u_x}\left(\eta(x,t,u,u_x,u_t)-\xi^1(x,t,u,u_x,u_t)u_x-\xi^2(x,t,u,u_x,u_t)u_t \right),\\
	\xi^2=-\frac{\partial}{\partial u_t}\left(\eta(x,t,u,u_x,u_t)-\xi^1(x,t,u,u_x,u_t)u_x-\xi^2(x,t,u,u_x,u_t)u_t \right),
\end{align*}
the so called \emph{contact conditions}, there might be a chance to find a symmetry of this kind. When it exists, we call it a \emph{contact symmetry}. 

Due to the necessity of a contact symmetry to satisfy a number of contact conditions its existence is very fragile: systems of differential equations do not admit any and the concept cannot be extended to involve derivatives of higher order. In order to really generalize the idea of a Lie point symmetry to involve higher order derivatives we need to overcome the fact that the coefficients of the prolonged infinitesimal generator will always contain derivatives of even higher orders not considered in the group transformation. The way to redeem this issue is by making a ``small'' leap forward and assume a group transformation that involves the derivatives of \emph{any} order: 
\begin{align*}
	\bar x =& \bar x(x,t,u,u_x,u_t,u_{xx},u_{xt},u_{tt},\dots;\epsilon),\\
	\bar t =& \bar t(x,t,u,u_x,u_t,u_{xx},u_{xt},u_{tt},\dots;\epsilon),\\
	\bar u =& \bar u(x,t,u,u_x,u_t,u_{xx},u_{xt},u_{tt},\dots;\epsilon),\\
	\bar u_{\bar x} =& \phi(x,t,u,u_x,u_t,u_{xx},u_{xt},u_{tt},\dots;\epsilon),\\
	\bar u_{\bar t} =& \psi(x,t,u,u_x,u_t,u_{xx},u_{xt},u_{tt},\dots;\epsilon),\\
	\bar u_{\bar x\bar x} =& \rho(x,t,u,u_x,u_t,u_{xx},u_{xt},u_{tt},\dots;\epsilon),\\
	\bar u_{\bar x\bar t} =& \sigma(x,t,u,u_x,u_t,u_{xx},u_{xt},u_{tt},\dots;\epsilon),\\
	\bar u_{\bar t\bar t} =& \tau(x,t,u,u_x,u_t,u_{xx},u_{xt},u_{tt},\dots;\epsilon),\\
	\vdots&
\end{align*}
We call this kind of symmetry \emph{generalized}, \emph{dynamic} or \emph{Lie-Bäcklund} symmetry. Its representation as an infinitesimal vector takes the form
\begin{align*}
	\mathcal{X} =& \xi^1(x,t,u,u_x,u_t,u_{xx},u_{xt},u_{tt},\dots)\frac{\partial }{\partial x}+\xi^2(x,t,u,u_x,u_t,u_{xx},u_{xt},u_{tt},\dots)\frac{\partial }{\partial t}+ \eta(x,t,u,u_x,u_t,u_{xx},u_{xt},u_{tt},\dots) \frac{\partial }{\partial u}+\cdots, 
\end{align*}
where we determine the coefficients for the derivatives of you $u$ by applying a suitable version of the prolongation fórmula. It is easy to verify that the generalized symmetries
\begin{align*}
	&\xi^1(x,t,u,u_x,u_t,u_{xx},u_{xt},u_{tt},\dots)D_x =\xi^1(x,t,u,u_x,u_t,u_{xx},u_{xt},u_{tt},\dots)\left(\frac{\partial}{\partial x} + u_x\frac{\partial}{\partial u}+u_{xx}\frac{\partial}{\partial u_x}+ u_{xt}\frac{\partial}{\partial u_t}+\cdots\right)
  \intertext{and}
	&\xi^2(x,t,u,u_x,u_t,u_{xx},u_{xt},u_{tt},\dots)D_t =\xi^2(x,t,u,u_x,u_t,u_{xx},u_{xt},u_{tt},\dots)\left(\frac{\partial}{\partial t} + u_t\frac{\partial}{\partial u}+u_{xt}\frac{\partial}{\partial u_x}+ u_{tt}\frac{\partial}{\partial u_t}+\cdots\right),
\end{align*}
are symmetries admitted by \emph{any} differential equation and hence, in this context, we can consider them as \emph{trivial} symmetries in the same fashion that the zero vector is a trivial symmetry  in the context of Lie point symmetries. The Lie algebra generated by all these trivial symmetries is an ideal of the Lie algebra of all the generalized symmetries of a given differential equation, so by constructing the quotient space by this ideal we obtain the Lie algebra of all the non trivial generalized symmetries. An element of such a quotient space has the form
\begin{align*}
	\mathcal{X} =&\left(\eta(x,t,u,u_x,u_t,u_{xx},u_{xt},u_{tt},\dots)-\xi^1(x,t,u,u_x,u_t,u_{xx},u_{xt},u_{tt},\dots)u_x- \xi^2(x,t,u,u_x,u_t,u_{xx},u_{xt},u_{tt},\dots)u_t\right) \frac{\partial }{\partial u}+\cdots\\
	=& Q(x,t,u,u_x,u_t,u_{xx},u_{xt},u_{tt},\dots)\frac{\partial }{\partial u}+D_xQ\frac{\partial }{\partial u_x}+D_tQ\frac{\partial }{\partial u_t}+D_x^2 Q\frac{\partial }{\partial u_{xx}}+D_xD_tQ\frac{\partial }{\partial u_{xt}}+D_t^2 Q\frac{\partial }{\partial u_{tt}}+\cdots, 
\end{align*}
this is the \emph{characteristic form} of a generalized symmetry. Observe how much simpler is now to determine the coefficients for the derivatives of $u$!

To obtain a generalized symmetry we first fix the form of the the characteristic function $Q$, for instance $Q = u_{xxx}- Q(x,t,u,u_x,u_{xx})$, and then we employ the linearized symmetry condition \eqref{eq:LinSymCond} to obtain the determining equations that will specify $Q$.

\subsection{Approximate symmetries}\label{sec:ApproximateSymmetries}

When a differential equation involves a small parameter $\varepsilon\ll1$ --- like in the case of equation~\eqref{eq:PerturbedEq} --- we deal with a \emph{perturbed differential equation}. In our case, we consider \eqref{eq:PerturbedEq} as a perturbation of the heat~equation~\eqref{eq:Heat}. On top of the types of symmetries described so far there is another one type of symmetry that we can seek out for this kind of differential equations: we can assume that a transformation, or its corresponding infinitesimal generator, is a symmetry up to order $\varepsilon$, that is
\begin{equation}\label{eq:ApproxLinSymCond}
	\mathcal{X}^{(2)}[u_t-u_{xx}+\varepsilon u_{xxxx}]\Big\rvert_{u_t=u_{xx}-\varepsilon u_{xxxx}} = {\scriptstyle\mathcal{O}}(\varepsilon), 
\end{equation}
where the vector $\mathcal X$ has now the form
$$
	\mathcal{X} = \mathcal{X}_0+\varepsilon \mathcal{X}_\varepsilon.
$$
We call this type of symmetry an \emph{approximate symmetry}, \cite{Ibr2008a}. The vector $\mathcal{X}_0$  can be any Lie point symmetry of the unperturbed differential equation --- in our case the heat equation --- including the null vector. 

If for a symmetry of \eqref{eq:Heat}, $\mathcal Y$, there is an approximate symmetry $\mathcal Y+\varepsilon\mathcal{Y}_\varepsilon$ of \eqref{eq:PerturbedEq} we say that $\mathcal Y$ is \emph{stable}, that is, it is not lost due to the perturbation of the initial differential equation. Furthermore if this is valid for every element of the Lie algebra $\mathcal L$ we say that $\mathcal L$ is \emph{stable}, \cite[p. $27$]{Ibr2008a}. Stability of the Lie algebra means that the Lie algebra of approximate symmetries of equation~\eqref{eq:PerturbedEq} can be viewed as  a \emph{deformation} of the Lie algebra of point symmetries of equation~\eqref{eq:Heat}, and as such equation~\eqref{eq:PerturbedEq} \emph{inherits} all the results that Lie point symmetries yield for the heat equation as their perturbations. Moreover, it guarantees the existence of a transformation that connects the two differential equations.  Stabilizing $\mathcal L$ will be a crucial step in expressing the fundamental solution of equation~\eqref{eq:PerturbedEq} as a (singular) perturbation of the fundamental solution of the heat equation, $\upsilon(x,t)$.

An important consequence of this variation of the algorithmic process described in subsection~\ref{sec:LiPointSymmetries} is that now the Lie algebra of Lie point symmetries of the heat equation~\eqref{eq:Heat} appears as a deformation of the null vector, $\varepsilon\,\kappa_i \mathcal{X}_i$. In the literature these approximate symmetries are considered  trivial included only in order to guarantee the closeness of the Lie algebra of approximates symmetries, see also \cite[p. $27$]{Ibr2008a}. In the next section we shall see that this is far from the truth.

 The symmetries of a differential equation --- like the DNA for a living organism ---  provide essencial structural information that help us to identify and reveal important properties, reduce the order and even obtain solutions, as we have already seen, to name a few. Indeed, one can ponder why not just substitute $u(x,t)=u_0(x,t)+\varepsilon u_1(x,t)$ and solve the resulting system of PDEs of the second order. By using approximate symmetries we don't just make a substitution but we obtain the suitable invariant form of the solution that after a substitution will yield a system of ODEs of the second order hence facilitating  the construction of the approximate solution.

Evidently, getting the Lie algebra from \eqref{eq:LinSymCond} or \eqref{eq:ApproxLinSymCond} on the one hand involves a great deal of copious and error-prone calculations and on the other hand is a completely algorithmic procedure. These facts render the use of \emph{computer algebra systems} --- another facet of enhanced modern group analysis --- essential. For our purposes we employed the symbolic package SYM for Wolfram Mathematica\texttrademark, \cite{DiTs2k5a, MathematicaV14}. With the content of this section we barely skimmed the surface of this remarkably beautiful theory, for more details see also \cite{Ibr2008a,  Olver2k,Ovsi82, BluKu89, Ibra85a, Ste90, Hydon2k, Ibra95c}.

\section{Main results}

\subsection{The fundamental approximate solution of \eqref{eq:PerturbedEq}}

We start our study with the approximate Lie point symmetries of equation~\eqref{eq:PerturbedEq}.
\begin{theorem}
	The finite component of the Lie algebra of approximate Lie point symmetries of the perturbed PDE
	$$
		u_t=u_{xx} - \varepsilon u_{xxxx}
	$$
	is spanned by the vectors
	\begin{align*}
		\mathcal{Y}_1 =& \mathcal{X}_1,& \mathcal{Y}_5 =& \varepsilon\mathcal{X}_1\\
		\mathcal{Y}_2 =& \mathcal{X}_2,& \mathcal{Y}_6 =& \varepsilon\mathcal{X}_2\\
		\mathcal{Y}_3 =& \mathcal{X}_3,& \mathcal{Y}_7 =& \varepsilon\mathcal{X}_3\\
		--&- &\mathcal{Y}_8 =& \varepsilon\mathcal{X}_4, \\
		\mathcal{Y}_4 =& \mathcal{X}_5+\varepsilon\left(2x\partial_t\right),& \mathcal{Y}_{9} =& \varepsilon\mathcal{X}_5\\
		--&- &\mathcal{Y}_{10} =& \varepsilon\mathcal{X}_6.
	\end{align*}
	where $\mathcal{X}_i$ are given in \eqref{Heat:LieAlgebra}.
\end{theorem}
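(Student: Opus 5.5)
The plan is the Baikov--Gazizov--Ibragimov algorithm. We write $\mathcal{X}=\mathcal{X}_0+\varepsilon\mathcal{X}_\varepsilon$ and split the approximate symmetry condition~\eqref{eq:ApproxLinSymCond} by powers of $\varepsilon$. Since the perturbed equation is fourth order, the condition uses the fourth prolongation. We work modulo $\mathcal{O}(\varepsilon^2)$, substituting $u_t=u_{xx}-\varepsilon u_{xxxx}$ and its differential consequences.

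At order $\varepsilon^0$ the condition reduces to the exact symmetry condition~\eqref{eq:LinSymCond} for the heat equation. Hence $\mathcal{X}_0=\sum_{i=1}^6 c_i\mathcal{X}_i+\mathcal{X}_f$ by~\eqref{Heat:LieAlgebra}.

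At order $\varepsilon^1$ we get an inhomogeneous determining system:
\[
\mathcal{X}_\varepsilon^{(2)}[u_t-u_{xx}]\big\rvert_{u_t=u_{xx}} = -H,
\qquad H=\mathcal{X}_0^{(4)}[u_{xxxx}]+\bigl(\text{terms from }\mathcal{X}_0^{(2)}[u_t-u_{xx}]\text{ once } u_t=u_{xx}-\varepsilon u_{xxxx}\text{ is used}\bigr),
\]
with everything evaluated on $u_t=u_{xx}$. The first step is to compute this auxiliary function $H$ for each basis generator separately. For $\mathcal{X}_1,\mathcal{X}_2,\mathcal{X}_3$, $H=0$, since these commute with $\partial_x^4$. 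For $\mathcal{X}_4$, $\mathcal{X}_5$, $\mathcal{X}_6$ and $\mathcal{X}_f$, $H$ is nonzero. For example, for $\mathcal{X}_5=2t\partial_x-xu\partial_u$ one finds $\eta^{xxxx}=-xu_{xxxx}-4u_{xxx}-2tu_{xxxxx}$. The $u_{xxx}$ term must be absorbed by $\mathcal{X}_\varepsilon$.

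The second step is to solve the inhomogeneous system for $\mathcal{X}_\varepsilon$, with ansatz $\mathcal{X}_\varepsilon=\zeta^1\partial_x+\zeta^2\partial_t+\zeta\partial_u$. Splitting in the derivatives $u_x,u_{xx},u_{xxx},\dots$ gives linear PDEs whose homogeneous part is again the heat determining system. Its solutions are the trivial approximate symmetries $\varepsilon\mathcal{X}_i$, which give $\mathcal{Y}_5,\dots,\mathcal{Y}_{10}$, together with $\varepsilon\mathcal{X}_f$.

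The crux is the compatibility of the inhomogeneous part. For $\mathcal{X}_5$ we verify that $\zeta^2=2x$ works. The generated term $-\zeta^2_x u_x$ type contributions from prolonging $2x\partial_t$ cancel the $u_{xxx}$ obstruction; we check this directly with the prolongation formulas. For $\mathcal{X}_4$ and $\mathcal{X}_6$ the obstruction comes from mismatched scaling: $\mathcal{X}_4$ scales $u_{xxxx}$ by weight $-4$ but $u_t-u_{xx}$ by weight $-2$. This produces a term proportional to $u_{xxxx}$ that a point-transformation correction cannot cancel, because the first-order coefficients of $\mathcal{X}_\varepsilon^{(2)}$ only produce derivatives up to order $3$ after substitution. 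We show this by isolating the coefficient of $u_{xxxx}$ (and, for $\mathcal{X}_6$, of $u_{xxxxx}$ and $u_{xxxx}$) in the split system. The resulting equations force $c_4=c_6=0$.

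For $\mathcal{X}_f$ the obstruction is $-f_{xxxx}$, which must equal $\zeta_t-\zeta_{xx}$. This is solvable for any $f$ (for example $\zeta=-tf_{xxxx}$), which yields the infinite component. Restricting to the finite component and assembling the surviving generators gives exactly $\mathcal{Y}_1,\dots,\mathcal{Y}_{10}$.

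The main obstacle I expect is proving that no correction stabilizes $\mathcal{X}_4$ and $\mathcal{X}_6$. This requires carefully tracking the highest-derivative coefficients after substitution on $u_t=u_{xx}$, and confirming that no combination $c_4\mathcal{X}_4+c_6\mathcal{X}_6+c_5\mathcal{X}_5$ cancels them jointly. I would handle this by solving the full split system symbolically (as the paper does, via SYM) and cross-checking by hand on the $u_{xxxx}$ and $u_{xxxxx}$ coefficients.
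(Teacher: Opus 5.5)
Your proposal is correct and is essentially the paper's argument. The paper simply runs the Lie algorithm for approximate symmetries to first order in $\varepsilon$ (via SYM), and your hand outline of that computation reproduces exactly the stated basis: heat symmetries at order $\varepsilon^0$; an inhomogeneous order-$\varepsilon$ system whose obstruction $-2(c_4+4c_6t)u_{xxxx}$ cannot be absorbed by a point correction and so removes $\mathcal{X}_4$ and $\mathcal{X}_6$; the correction $2x\partial_t$ absorbing the $-4u_{xxx}$ term of $\mathcal{X}_5$; and the homogeneous part producing $\varepsilon\mathcal{X}_1,\dots,\varepsilon\mathcal{X}_6$.

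Two minor notational points. The expression you call $\eta^{xxxx}$ for $\mathcal{X}_5$ is really $D_x^4Q$ in characteristic form; the expanded coefficient is $-xu_{xxxx}-4u_{xxx}$. Also, for $\mathcal{X}_6$ no $u_{xxxxx}$ term survives in $H$, so the $u_{xxxx}$ coefficient alone forces $c_4=c_6=0$.
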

\begin{proof}
Direct use of the Lie algorithm up to order $\varepsilon$ --- see also subsection~\ref{sec:ApproximateSymmetries} --- using SYM,  \cite[p. $27$]{Ibr2008a}.
\end{proof}

The Lie algebra is not stable. Due to the perturbation we loose the vectors $\mathcal{X}_4$ and $\mathcal{X}_6$, which are crucial for inheriting the fundamental solution. So, we need a way to stabilize these two vectors. To accomplish that we need to consider that the deformed part of the two vector is a contact or a  generalized symmetry, see subsection~\ref{sec:GeneralizedSymmetries}. That is, we shall look for approximates symmetries of the form
\begin{align*}
	\mathcal{X}_4+\varepsilon\, Q(x,t,u,u_x,u_{xx},u_{xxx},\dots)\partial_u && \mathcal{X}_6+\varepsilon\, R(x,t,u,u_x,u_{xx},u_{xxx},\dots)\partial_u.
\end{align*}
This idea was first insinuated in \cite[p. $35$, eq.~$(4.1.9)$]{Ibr2008a}  and then fully realized in \cite{TarChe2021a}. So, by applying the Lie algorithm to this mixed type of symmetry we arrive to the following result
\begin{theorem}
	By allowing the perturbed part of the approximate symmetries to be a Lie-Bäcklund symmetry, we obtain the following approximate symmetries of~\eqref{eq:PerturbedEq}:
	\begin{align*}
		\mathcal{Z}_{1} =& \mathcal{X}_4 + \varepsilon\left(-xu_{xxx}\partial_u\right),\\
		\mathcal{Z}_{2} =& \mathcal{X}_6 + \varepsilon\left(3x\partial_x-x\left(3xu_{xx}+4tu_{xxx}\right)\partial_u\right).
	\end{align*}
\end{theorem}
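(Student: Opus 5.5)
The plan is to verify the approximate symmetry condition \eqref{eq:ApproxLinSymCond} directly, working throughout in the characteristic form of subsection~\ref{sec:GeneralizedSymmetries}. Any mixed symmetry $\mathcal X_0+\varepsilon\mathcal X_\varepsilon$, with $\mathcal X_0$ a point vector and $\mathcal X_\varepsilon$ possibly of Lie--Bäcklund type, can be replaced modulo the trivial symmetries $\xi^1D_x+\xi^2D_t$ by a single evolutionary vector $Q\,\partial_u+\cdots$.

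Because \eqref{eq:PerturbedEq} is linear, write it as $\Delta u=0$ with $\Delta=\partial_t-\partial_x^2+\varepsilon\partial_x^4$. The linearized symmetry condition then reduces to
$$
\Delta Q\big\rvert_{\Delta u=0}={\scriptstyle\mathcal O}(\varepsilon).
$$
Here $Q$ is a linear differential expression in $u$ with coefficients in $(x,t)$, and $\Delta$ acts on it as a differential operator.

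For $\mathcal Z_1$ the characteristic is $Q=-Su-\varepsilon xu_{xxx}$, where $S=x\partial_x+2t\partial_t$ is the operator corresponding to $\mathcal X_4$. The key step is the commutator
$$
[\Delta,S]=2\Delta+2\varepsilon\partial_x^4,
$$
which follows termwise from $[\partial_t,2t\partial_t]=2\partial_t$, $[-\partial_x^2,x\partial_x]=-2\partial_x^2$ and $[\varepsilon\partial_x^4,x\partial_x]=4\varepsilon\partial_x^4$. On solutions this gives $\Delta(-Su)=-2\varepsilon u_{xxxx}$. This is exactly the $\mathcal O(\varepsilon)$ obstruction that kills $\mathcal X_4$ in the previous theorem.

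For the correction term, only the unperturbed operator matters at order $\varepsilon$. We compute $(\partial_t-\partial_x^2)(xu_{xxx})=x(u_t-u_{xx})_{xxx}-2u_{xxxx}$. On solutions, $u_t-u_{xx}=\mathcal O(\varepsilon)$, so $\Delta(-\varepsilon xu_{xxx})=2\varepsilon u_{xxxx}+\mathcal O(\varepsilon^2)$. The two contributions cancel, which proves the claim for $\mathcal Z_1$.

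For $\mathcal Z_2$ I would proceed identically. Let $T$ be the operator with $Tu=4xtu_x+4t^2u_t+(x^2+2t)u$, which is minus the characteristic of $\mathcal X_6$. Compute $[\Delta,T]$ termwise. The heat part reproduces a multiple of $\Delta$, namely $8t\Delta$, because $\mathcal X_6$ is a symmetry of \eqref{eq:Heat}. The $\varepsilon\partial_x^4$ part leaves a residual $\varepsilon$-term involving $t\,u_{xxxx}$, $x\,u_{xxx}$ and $u_{xx}$, with polynomial coefficients.

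The total $\mathcal O(\varepsilon)$ characteristic of the perturbation $3x\partial_x-x(3xu_{xx}+4tu_{xxx})\partial_u$ is $-3xu_x-3x^2u_{xx}-4xtu_{xxx}$. I would apply $\partial_t-\partial_x^2$ to it, using $u_t\equiv u_{xx}$ at leading order, and check that the result cancels the residual exactly.

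The main obstacle is this bookkeeping for $\mathcal Z_2$: $[\partial_x^4,4xt\partial_x+x^2]$ produces several lower-order terms, and the $3x\partial_x$ point part must be converted correctly into its characteristic contribution $-3xu_x$. Sign conventions are the likely source of error. I would therefore double-check by expanding everything in terms of $u_{x^k}$ only, after eliminating $t$-derivatives, and matching coefficients of $u_x,\dots,u_{xxxx}$ separately.

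Finally, to show that these are the forms produced by the Lie algorithm rather than merely verified, I would make the ansatz $Q=\sum_{k\le3}a_k(x,t)u_{x^k}$. Imposing the condition above then yields a linear ODE system in the $a_k$. I would solve it (with SYM, as in the previous theorem) and check that, modulo the already known symmetries $\mathcal Y_i$, it yields precisely $\mathcal Z_1$ and $\mathcal Z_2$.
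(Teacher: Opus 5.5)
Your proposal is correct, and it proves the statement by a different route from the paper. The paper gives no hand computation: it runs the Lie algorithm in SYM on the ansatz $\kappa\mathcal{X}_4+\lambda\mathcal{X}_6+\varepsilon Q(x,t,u,u_x,u_{xx},u_{xxx})\partial_u$, so $\mathcal{Z}_1,\mathcal{Z}_2$ come out as solutions of the determining equations. You instead verify the two given candidates directly. You use linearity to reduce everything, in characteristic form, to the commutators $[\Delta,S]$ and $[\Delta,T]$.

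Your $\mathcal{Z}_1$ computation is right. The $\mathcal{Z}_2$ bookkeeping you left as a check also closes. One has $[\Delta,T]=8t\Delta+\varepsilon\left(8t\partial_x^4+8x\partial_x^3+12\partial_x^2\right)$, while on solutions, modulo $\varepsilon$,
\begin{align*}
(\partial_t-\partial_x^2)(xu_x)&\equiv-2u_{xx},\\
(\partial_t-\partial_x^2)(x^2u_{xx})&\equiv-2u_{xx}-4xu_{xxx},\\
(\partial_t-\partial_x^2)(xtu_{xxx})&\equiv xu_{xxx}-2tu_{xxxx}.
\end{align*}
Hence the correction $-3xu_x-3x^2u_{xx}-4xtu_{xxx}$ contributes $\varepsilon(12u_{xx}+8xu_{xxx}+8tu_{xxxx})$, which cancels the residual exactly.

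Your route gives a software-independent proof and shows \emph{why} the corrections are needed. For $\mathcal{Z}_1$, the residual $2\varepsilon\partial_x^4$ comes from the different scaling weights of $\partial_x^2$ and $\partial_x^4$, and it is undone by $[\partial_t-\partial_x^2,x\partial_x^3]=-2\partial_x^4$. The paper's route is what \emph{discovers} these vectors and, in principle, exhausts the ansatz.

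Your final paragraph needs two small corrections. First, the ansatz fixes the correction only modulo $\varepsilon$ times any characteristic of order at most three of a (generalized) symmetry of the heat equation, e.g.\ $\varepsilon u_{xx}\partial_u$ or $\varepsilon u_{xxx}\partial_u$. It is not merely modulo the point symmetries $\mathcal{Y}_i$; the paper later uses exactly such a null deformation, $\mathcal{Z}_3=\varepsilon u_{xx}\partial_u$. Second, your linear ansatz is narrower than the paper's general $Q$. Neither point affects the verification, which is all the statement requires.
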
 
\begin{proof}
Direct use of the Lie algorithm up to order $\varepsilon$ for a mixed approximate symmetry of the form
$$
	\kappa \mathcal{X}_4+\lambda\mathcal{X}_6 + \varepsilon \,Q(x,t,u,u_x,u_{xx},u_{xxx})\partial_u
$$
using SYM.
\end{proof}

With the whole Lie algebra of Lie point symmetries of \eqref{eq:Heat} stabilized we can now obtain the approximate fundamental solution of equation~\eqref{eq:PerturbedEq} as a perturbation of the fundamental solution of equation~\eqref{eq:Heat}.

\begin{theorem}
The approximate fundamental solution of equation~\eqref{eq:PerturbedEq} is 
\begin{equation}\label{eq:ApproximateFundamentalSolution1}
		\upsilon_\varepsilon(x,t) = \frac{e^{-\dfrac{x^2}{4 t}}}{2\sqrt{\pi t}} - \varepsilon \frac{12 t^2-12t x^2+x^4}{32\sqrt{\pi t^7} }e^{-\dfrac{x^2}{4 t}}.
\end{equation}
\end{theorem}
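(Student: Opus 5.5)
The plan is to obtain \eqref{eq:ApproximateFundamentalSolution1} as an \emph{approximate invariant solution}, repeating the construction used for the heat equation but with the stabilized algebra. For \eqref{eq:FundamentalProblem} the invariance subalgebra was $\mathcal{X}_5$, $\mathcal{X}_4-\mathcal{X}_3$, $\mathcal{X}_6$. Their stabilized counterparts are $\mathcal{Y}_4=\mathcal{X}_5+2\varepsilon x\partial_t$, $\mathcal{Z}_1-\mathcal{Y}_3$ and $\mathcal{Z}_2$, taken from the two preceding theorems. I would seek a solution
$$
u=\upsilon(x,t)+\varepsilon w(x,t)+{\scriptstyle\mathcal{O}}(\varepsilon)
$$
of \eqref{eq:PerturbedEq}, where $\upsilon$ is the Gaussian. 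The requirement is that the characteristic of each of the three vectors vanish on $u$ up to order $\varepsilon$. For $\mathcal{Z}_1-\mathcal{Y}_3$, for example, this condition reads
$$
-xu_x-2tu_t-u-\varepsilon x u_{xxx}={\scriptstyle\mathcal{O}}(\varepsilon).
$$
The characteristics of $\mathcal{Y}_4$ and $\mathcal{Z}_2$ give the analogous conditions.

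Step one is to expand each condition in $\varepsilon$. The order-zero parts hold automatically, since $\upsilon$ is the invariant solution of the unperturbed problem. The order-$\varepsilon$ parts form a linear system of first-order PDEs for $w$ whose sources involve derivatives of $\upsilon$. Where a condition contains $u_t$, such as the $2t u_t$ term above, I would replace it using the equation itself, $u_t=u_{xx}-\varepsilon u_{xxxx}$. Substituting $u=\upsilon+\varepsilon w$ then turns the $u_t$-terms into known $x$-derivatives of $\upsilon$ plus a $w$-term.

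Step two is to solve this system. The $\mathcal{Y}_4$ condition, $2tw_x+xw=$(terms in $\upsilon_t$), is a linear ODE in $x$. It forces $w=e^{-x^2/4t}\,g(x,t)$ with $g$ determined up to an additive function of $t$. The $\mathcal{Z}_1-\mathcal{Y}_3$ condition is a scaling (Euler-type) equation, and it fixes the dependence of $g$ on $t$ to be of the form $t^{-7/2}\,\times$ a polynomial in $x^2/t$. The $\mathcal{Z}_2$ condition should then pin down the remaining constants, except possibly for a multiple of a solution of the homogeneous heat equation. That leftover I would fix by the initial condition: $w(x,0)=0$ in the distributional sense, equivalently conservation of the total mass $\int w\,dx=0$.

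As a consistency check and as a shortcut, note that $v_{xxxx}=\frac{x^4-12tx^2+12t^2}{16t^4}\upsilon$. Since $(\partial_t-\partial_x^2)(t\,\upsilon_{xxxx})=\upsilon_{xxxx}$, the function $w=-t\,\upsilon_{xxxx}$ satisfies $w_t=w_{xx}-\upsilon_{xxxx}$. This is exactly the order-$\varepsilon$ equation. Moreover $w$ has zero mass and tends to $0$ weakly as $t\to0$, and it gives precisely the correction term in \eqref{eq:ApproximateFundamentalSolution1}. I would also verify that it annihilates the three order-$\varepsilon$ invariance conditions.

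I expect the main obstacle to be step two. The generalized parts of $\mathcal{Z}_1$ and $\mathcal{Z}_2$ bring in third derivatives, so the three conditions must be checked for mutual compatibility, and the choice of combination and normalization for $\mathcal{Z}_2$ matters. I would first try solving only the $\mathcal{Y}_4$ and $\mathcal{Z}_1-\mathcal{Y}_3$ conditions together with the order-$\varepsilon$ equation, and use $\mathcal{Z}_2$ only as a check.
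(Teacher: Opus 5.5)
\textbf{There is a genuine gap in the symmetry derivation, though your closing shortcut is a valid proof.}

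Your shortcut is correct and is a complete proof on its own. The function $w=-t\upsilon_{xxxx}$ solves $w_t=w_{xx}-\upsilon_{xxxx}$, has zero mass, and tends weakly to $0$ as $t\to0$. With standard uniqueness for the heat equation in a tempered class, it is therefore the order-$\varepsilon$ part of the solution with data $\delta(x)+\varepsilon\cdot0$.

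The symmetry derivation you present as your main argument does not work. Your planned check that $w$ annihilates the three order-$\varepsilon$ invariance conditions is false. Differentiating $2t\upsilon_x+x\upsilon=0$ gives $2t\,\partial_x^{n+1}\upsilon+x\,\partial_x^{n}\upsilon+n\,\partial_x^{n-1}\upsilon=0$. Using these identities, the order-$\varepsilon$ part of each characteristic, evaluated on $u=\upsilon-\varepsilon t\upsilon_{xxxx}$, is $3\upsilon_{xx}$ for $\mathcal{Z}_1-\mathcal{Y}_3$, $4\upsilon_x$ for $\mathcal{Y}_4$, and $-3\upsilon$ for $\mathcal{Z}_2$. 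None of these vanish.

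So your step two cannot reach the answer. The $\mathcal{Z}_1-\mathcal{Y}_3$ condition together with the order-$\varepsilon$ equation gives $w=\tfrac12x\upsilon_{xxx}+t^{-1/2}\Psi(x/\sqrt t)=-t\upsilon_{xxxx}-\tfrac32\upsilon_{xx}+t^{-1/2}\Psi(x/\sqrt t)$, where $t^{-1/2}\Psi$ is a self-similar heat solution. Its initial trace contains $-\tfrac32\delta''$, which that leftover cannot cancel, so fixing the leftover by the initial condition is impossible. The $\mathcal{Y}_4$ condition with the equation gives a different family, $-t\upsilon_{xxxx}-2\upsilon_{xx}+c\upsilon$. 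The system you intend to solve first is therefore inconsistent.

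The missing idea is the one the paper's proof turns on. The stabilized generators do not by themselves preserve the initial datum. They must be corrected by null deformations such as $\varepsilon u\partial_u$ and $\varepsilon u_{xx}\partial_u$, which are trivially approximate symmetries and usually discarded.
\begin{itemize}
\item In the paper, invariance of $t=0$ eliminates $\mathcal{Y}_4$, because its part $2\varepsilon x\partial_t$ moves the initial line.
\item The action on $\delta(x)$ then forces the combinations $\mathcal{Z}_1-\mathcal{X}_3-3\varepsilon u_{xx}\partial_u$ and $\mathcal{Z}_2+3\varepsilon u\partial_u$. Their extra terms cancel exactly the residuals $3\upsilon_{xx}$ and $-3\upsilon$ found above.
\item With the first combination, the order-$\varepsilon$ condition reads $-w-xw_x-2tw_t=x\upsilon_{xxx}+3\upsilon_{xx}=-2t\upsilon_{xxxx}$. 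Its particular solution homogeneous of degree $-3$ is exactly $-t\upsilon_{xxxx}$.
\item Substituting into the equation and using the initial condition then removes the free self-similar part, giving $\psi=0$.
\end{itemize}

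So either present the direct verification as the proof, or insert these null corrections before imposing invariance.
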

\begin{proof}
	We need to find the Lie subalgebra of approximate symmetries that keeps invariant the Cauchy problem 
	\begin{equation}\label{eq:ApproximateFundamentalProblem}
	\begin{cases}
	\upsilon_{\varepsilon,\,t} = \upsilon_{\varepsilon,\,xx}-\varepsilon \upsilon_{\varepsilon,\,xxxx},\ -\infty<x<\infty,\, t>0,\\
	\upsilon_\varepsilon(x,0) = \delta(x)+\varepsilon\cdot0, \ -\infty<x<\infty,
	\end{cases}	
	\end{equation}
up to order $\varepsilon$.
We know that the Lie subalgebra spanned by the vectors
\begin{align*}
	\mathcal{X}_5, && \mathcal{X}_4 - \mathcal{X}_3, && \mathcal{X}_6,
\end{align*}
keeps invariant the Cauchy problem \eqref{eq:FundamentalProblem}, so we can start our analysis from its deformation spanned by 
\begin{align*}
	\mathcal{Y}_4, && \mathcal{Z}_1 - \mathcal{X}_3, && \mathcal{Z}_2.
\end{align*}
Next, we need to find the Lie subalgebra that satisfies the initial condition, that is, we need all the vectors that keep invariant the border $t=0$, for $t=0$ the symmetries should keep the pole of the Dirac function $x=0$ invariant, and finally they should keep invariant the condition itself $\upsilon_\varepsilon(x,0) = \delta(x)+\varepsilon\cdot0$. By mere inspection of the deformation of the the linear combination of vectors $\kappa \mathcal{Y}_4+ \lambda\left(\mathcal{Z}_1 - \mathcal{X}_3\right)+\mu\mathcal{Z}_2$,
$$
3\mu x\partial_x+2\kappa x\partial_t-\left(\lambda  u_{xxx}+\mu \left(3xu_{xx}+4tu_{xxx}\right) \right)x\partial_u,
$$
we can see that it leaves invariant the first condition if, and only if, $\kappa=0$, while the second condition is satisfied for any values of $\lambda$ and $\mu$. As for the last condition, by taking into account that
\begin{align*}
  	&\left(\xi^1(x,t,u,\dots)\frac{\partial }{\partial x}+\xi^2(x,t,u,\dots)\frac{\partial }{\partial t} + \eta(x,t,u,\dots) \frac{\partial }{\partial u}\right)[\delta(x-x_0)] =- \left. \frac{\partial}{\partial x}\xi_1(x,t,u,\dots) \right\rvert_{x=x_0}\delta(x-x_0),
\end{align*}
see also \cite[p. $304$]{Ibr2009b}, we arrive at the restriction 
\begin{align*}
  -\left(\lambda x \delta^{\prime\prime\prime}(x)+3\mu x^2\delta^{\prime\prime}(x) \right) = - 3\mu\delta(x)\implies& 6\mu\delta(x)-3\lambda\delta^{\prime\prime}(x) = 3\mu\delta(x) \implies \mu\delta(x)-\lambda\delta^{\prime\prime}(x) = 0.
\end{align*}
This condition is satisfied only when $\lambda=\mu=0$. Something is amiss: we disconsider \emph{a priori} the deformations of the null vector, which by definition keeps invariant the problem at hand! 

Since the final condition involves the Dirac delta function and its second derivative, we seek null deformations that reproduce exactly these two terms under the same conditions. The simplest such deformations are
\begin{align*}
	\mathcal{Y}_7=\varepsilon\,\mathcal{X}_3=\varepsilon\, u\partial_u &&\text{and}&& \mathcal{Z}_3=\varepsilon\,u_{xx}\partial_u.
\end{align*}
Indeed, verifying the last condition for the linear combination of vectors $\lambda\left(\mathcal{Z}_1 - \mathcal{Y}_y\right)+\mu\mathcal{Z}_2+\alpha\mathcal{X}_3+\beta\mathcal{Z}_3$ we arrive at the relation
$$
	6\mu\delta(x)-3\lambda\delta^{\prime\prime}-\alpha\delta(x)-\beta\delta^{\prime\prime}(x) = 3\mu\delta(x)\implies (3\mu-\alpha)\delta(x)-(\beta+3\lambda)\delta^{\prime\prime}=0.
$$

Therefore, the Lie subalgebra spanned by the vectors 
\begin{align*}
	\mathcal{Z}_1 - \mathcal{X}_3-3\mathcal{Z}_3  && \mathcal{Z}_2+3\varepsilon\,\mathcal{X}_3
\end{align*}
keeps invariant the Cauchy~problem~\eqref{eq:ApproximateFundamentalProblem}. Since we are working in $1+1$ dimensions one of the two base elements is enough, we shall use the simplest one, $\mathcal{Z}_1 - \mathcal{X}_3-3\mathcal{Z}_3$. Thus, the function $\upsilon_\varepsilon$ is the invariant solution of that vector, that is,
$$
\left(\mathcal{Z}_1 - \mathcal{X}_3-3\mathcal{Z}_3\right)\left[u-\upsilon_\varepsilon(x,t)\right]_{u=\upsilon_\varepsilon(x,t)}=\mathcal{o}(\varepsilon)
$$
where
$$
	\upsilon_\varepsilon(x,t) = f(x,t)+\varepsilon g(x,t).
$$
By solving the resulting system we find that
\begin{align*}
	f(x,t) =& \frac{1}{\sqrt t}\phi\left(\frac{x^2}{t}\right)
  \intertext{and}
  g(x,t) =& \frac{1}{\sqrt{t}}\psi\left(\frac{x^2}{t}\right)+\frac{1}{t^{7/2}}\left(3t^2\phi^\prime+4x^2(3t\phi^{\prime\prime}+x^2\phi^{\prime\prime\prime})\right).
\end{align*}
Since $\upsilon_\varepsilon$ is the approximate solution of \eqref{eq:PerturbedEq} by substituting it into the Cauchy problem  we can determine the two functions $\phi$ and $\psi$, that is
\begin{align*}
	\phi(\zeta) = \frac{1}{2\sqrt{\pi}}e^{-\dfrac{\zeta}{4}} &&\text{and}&& \psi(\zeta) = 0.
\end{align*}
Thus, we arrive to the approximate fundamental solution
	$$
			\upsilon_\varepsilon(x,t) = \frac{e^{-\dfrac{x^2}{4 t}}}{2\sqrt{\pi t}} - \varepsilon \frac{12 t^2-12t x^2+x^4}{32\sqrt{\pi t^7}}e^{-\dfrac{x^2}{4 t}}.	
	$$
\end{proof}

So, when $\varepsilon\ll1$ the function 
$$
\frac{12 t^2-12t x^2+x^4}{32\sqrt{\pi t^7}}e^{-\dfrac{x^2}{4 t}}
$$
provides a very good approximation of the secondary diffusion that the model \eqref{eq:PerturbedEq} manifests. In figure~\ref{fig:ComparisonOfDiffusions1} we give a qualitative, and out of scale, comparison of the two diffusive processes.

\begin{figure}[H]
	\centering
	\includegraphics[scale=0.39]{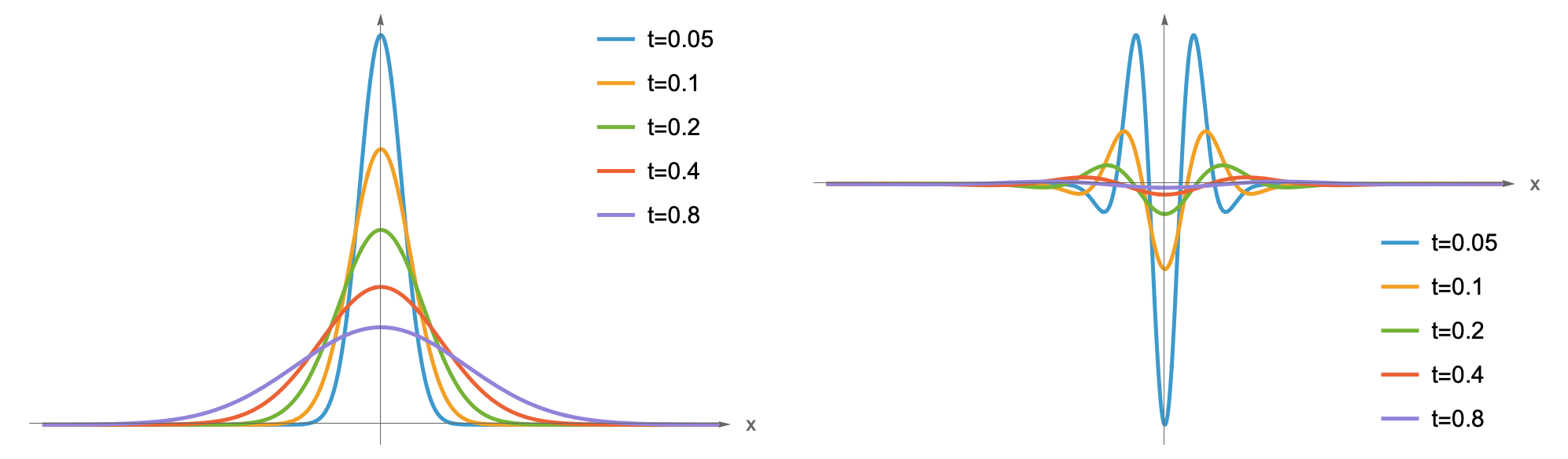}
	\caption{Qualitative comparison of the primary (left) and the secondary (right) diffusion for different values of $t$.}
	\label{fig:ComparisonOfDiffusions1}
\end{figure}

Equipped wih the approximate fundamental solution we can find the approximate solution of any Cauchy problem of the form
$$
\begin{cases}
	u_t = u_{xx}-\varepsilon u_{xxxx},\ -\infty<x<\infty,\, t>0,\\
	u(x,0) = f(x)+\varepsilon g(x), \ -\infty<x<\infty.
\end{cases}	
$$
The approximate solution is
\begin{equation}\label{eq:AppSolOfCauchyProblem1}
\begin{split}
	u(x,t) =& \upsilon_\varepsilon(x,t)*(f(x)+\varepsilon g(x)) = \int\limits_{-\infty}^\infty \left(\frac{e^{-\dfrac{\xi^2}{4 t}}}{2\sqrt{\pi t}} - \varepsilon \frac{12 t^2-12t \xi^2+\xi^4}{32\sqrt{\pi t^7}}e^{-\dfrac{\xi^2}{4 t}}\right)(f(x-\xi)+\varepsilon g(x-\xi))\,\mathrm{d}\xi\\
	=&\int\limits_{-\infty}^\infty \frac{e^{-\dfrac{\xi^2}{4 t}}}{2\sqrt{\pi t}} f(x-\xi)\,\mathrm{d}\xi + \varepsilon\int\limits_{-\infty}^\infty \left(\frac{e^{-\dfrac{x^2}{4 t}}}{2\sqrt{\pi t}}g(x-\xi) - \frac{12 t^2-12t \xi^2+\xi^4}{32\sqrt{\pi t^7}}e^{-\dfrac{\xi^2}{4 t}}f(x-\xi)\right)\,\mathrm{d}\xi.	
\end{split}
\end{equation} 
Next, we repeat the method just illustrated to equation~\eqref{eq:PerturbedEq2}.

\subsection{The fundamental approximate solution of \eqref{eq:PerturbedEq2}}

For equation~\eqref{eq:PerturbedEq2} the corresponding Cauchy problem that yields the fundamental solution takes the form
\begin{equation}\label{eq:FundamentalProblem2}
	\begin{cases}
	G_t = G_{xx}+\varepsilon\, x^2G_{xx} ,\ -\infty<x<\infty,\, t>0,\\
	G(x,0;y) = \delta(x-y), \ -\infty<x<\infty.
	\end{cases}	
\end{equation}
Instead of the problem \eqref{eq:FundamentalProblem2} we shall consider the equivalent \emph{family} of Cauchy problems:
 \begin{equation}\label{eq:FundamentalProblem2alt}
	\begin{cases}
	\upsilon_t = \upsilon_{xx}+\varepsilon (x+y)^2\upsilon_{xx} ,\ -\infty<x<\infty,\,-\infty<y<\infty,\, t>0,\\
	\upsilon(x,0) = \delta(x), \ -\infty<x<\infty.
	\end{cases}	
\end{equation}
For this family of PDEs we have the following theorem.
\begin{theorem}
	The finite component of the Lie algebra of approximate Lie point symmetries of the perturbed family of PDEs
	\begin{equation}\label{eq:PerturbedEq2alt}
		u_t=u_{xx} - \varepsilon\, (x+y)^2 u_{xxxx}
	\end{equation}
	is spanned by the vectors
	\begin{align*}
		--&- 									& \mathcal{Y}_5 =		& \varepsilon\mathcal{X}_1\\
		\mathcal{Y}_2 =& \mathcal{X}_2,						& \mathcal{Y}_6 =		& \varepsilon\mathcal{X}_2,\\
		\mathcal{Y}_3 =& \mathcal{X}_3,						& \mathcal{Y}_7 =		& \varepsilon\mathcal{X}_3,\\
		\mathcal{Y}_4 =& (x+y)\partial_x+2t\partial_t,	& \mathcal{Y}_8 =		& \varepsilon\mathcal{X}_4, \\
		--&- 									& \mathcal{Y}_{9} =	& \varepsilon\mathcal{X}_5,\\
		\mathcal{Y}_6 =& 4(x+y)t\partial_x+4t^2\partial_t -(2t+x(x+2y))u\partial_u	+\varepsilon x(x^3+4x^2y+6xy^2+4y^3)\partial_t,			& \mathcal{Y}_{10} =& \varepsilon\mathcal{X}_6.
	\end{align*}
	where $\mathcal{X}_i$ are given in \eqref{Heat:LieAlgebra}.
\end{theorem}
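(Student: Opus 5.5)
The plan is to run the Lie algorithm for approximate symmetries, as described in subsection~\ref{sec:ApproximateSymmetries}, on the family \eqref{eq:PerturbedEq2alt}, treating $y$ as a fixed parameter (so no $\partial_y$ component). I would look for generators $\mathcal{X}=\mathcal{X}_0+\varepsilon\mathcal{X}_\varepsilon$ with both parts Lie point vectors in $(x,t,u)$, possibly depending on $y$. The fourth prolongation is needed, since the equation is of order four. The approximate symmetry condition \eqref{eq:ApproxLinSymCond} then splits by powers of $\varepsilon$.

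At order $\varepsilon^0$ the condition is exactly the determining system \eqref{eq:LinSymCond} of the heat equation. Hence $\mathcal{X}_0=\sum c_i\mathcal{X}_i+\mathcal{X}_f$, with $c_i$ possibly functions of $y$, and this algebra is taken from \eqref{Heat:LieAlgebra}.

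At order $\varepsilon^1$ the condition reads
\[
\mathcal{X}_\varepsilon^{(2)}[u_t-u_{xx}]\big\rvert_{u_t=u_{xx}}+H=0,
\]
where $H$ is the obstruction coming from $\mathcal{X}_0$ acting on the perturbation. Explicitly,
\[
H=\mathcal{X}_0^{(4)}\big[(x+y)^2u_{xxxx}\big]-(x+y)^2\,\partial_{u_{xxxx}}\text{-compatible corrections},
\]
with $u_t$ replaced through $u_t=u_{xx}-\varepsilon(x+y)^2u_{xxxx}$. I would compute $H$ for each heat generator separately and treat the first-order equation as an inhomogeneous heat determining system for $\mathcal{X}_\varepsilon$. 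It is solvable exactly when $H$ lies in the image of the linearized heat operator, which restricts the admissible combinations of the $c_i$. The homogeneous part of this system just returns $\varepsilon\mathcal{X}_i$, which gives $\mathcal{Y}_5,\dots,\mathcal{Y}_{10}$.

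For the stability analysis I would go generator by generator.
\begin{itemize}
\item $\mathcal{X}_2$ and $\mathcal{X}_3$ leave $(x+y)^2u_{xxxx}$ invariant, so they survive unchanged.
\item $\mathcal{X}_1$ and $\mathcal{X}_5$ fail. The term $2(x+y)u_{xxxx}$ (and its analogue for $\mathcal{X}_5$) cannot be compensated by any point deformation.
\item For the scaling, the correct combination is $\mathcal{X}_4+y\mathcal{X}_1=(x+y)\partial_x+2t\partial_t$. Under it $(x+y)^2u_{xxxx}$ scales like $u_{xx}$ and $u_t$, so it survives without correction.
\item For the projective generator, I would take the combination $\mathcal{X}_6+4y\,t\,\mathcal{X}_1+c\,\mathcal{X}_5+\dots$ that centers it at $x=-y$; the multiplier $-(2t+x(x+2y))u$ suggests this. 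The residual obstruction should be proportional to $u_{xxxx}$ times a polynomial in $x$. It can be absorbed by $\xi^2_\varepsilon=x(x^3+4x^2y+6xy^2+4y^3)=(x+y)^4-y^4$, since $\xi^2_\varepsilon\partial_t$ contributes $-\xi^2_{\varepsilon,xx}u_t$-type terms together with $\xi^2_\varepsilon$-weighted fourth-order terms through the substitution.
\end{itemize}

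The main obstacle is this last step. It requires the exact bookkeeping of the order-$\varepsilon$ terms generated when $\mathcal{X}_0$ acts on $(x+y)^2u_{xxxx}$, and when $u_t$ is eliminated in $\mathcal{X}_\varepsilon$'s prolongation. Only then can one show that a polynomial $\xi^2_\varepsilon(x)$ cancels them, and that $\mathcal{X}_1$ and $\mathcal{X}_5$ admit no such cancellation. This is where I would rely on SYM, as in the earlier theorems, checking by hand only the final cancellation for $\mathcal{Y}_6$.
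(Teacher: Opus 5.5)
Your overall route is the paper's. Its proof consists only of running the approximate Lie algorithm to order $\varepsilon$ in SYM, so deferring that computation to SYM does establish the statement. The part you argue by hand, however, goes wrong exactly at the step you flag as the crux, and the by-hand check of $\mathcal{Y}_6$ would not go through as you describe it.

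First, the projective combination must be $\mathcal{X}_6+2y\mathcal{X}_5=4(x+y)t\partial_x+4t^2\partial_t-(2t+x^2+2xy)u\partial_u$. The expression $\mathcal{X}_6+4yt\,\mathcal{X}_1+c\,\mathcal{X}_5$ is not a heat symmetry for any constant $c$, since $t\partial_x$ by itself is not one. The term $2y\mathcal{X}_5$ already supplies both the shift $4yt\partial_x$ and the $-2xyu\partial_u$ in the multiplier.

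Second, for a heat generator one has $\mathcal{X}_0^{(2)}[u_t-u_{xx}]=(\eta_u-\xi^2_t)(u_t-u_{xx})$ identically. Your vague ``compatible corrections'' is therefore exactly $H=\mathcal{X}_0^{(4)}[(x+y)^2u_{xxxx}]-(\eta_u-\xi^2_t)(x+y)^2u_{xxxx}$. The $u_{xxxx}$-coefficient of $H$ is $2(x+y)\bigl(\xi^1-(x+y)\xi^1_x\bigr)$.

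A point deformation $\varepsilon\mathcal{X}_\varepsilon$ can never produce $u_{xxxx}$ at order $\varepsilon$. After $u_t=u_{xx}$ and $u_{xt}=u_{xxx}$, its second prolongation yields at most $u_{xxx}$, and the $\varepsilon$-part of the substitution enters only at order $\varepsilon^2$. So this coefficient must vanish by itself, for the general combination $\sum c_i\mathcal{X}_i$ rather than generator by generator. That forces $\xi^1=c(t)(x+y)$, i.e.\ $c_1=yc_4$ and $c_5=2yc_6$. This is the actual selection mechanism, and it gives exactly $\mathcal{X}_2$, $\mathcal{X}_3$, $\mathcal{X}_4+y\mathcal{X}_1$ and $\mathcal{X}_6+2y\mathcal{X}_5$.

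Consequently, two of your claims are false. The residual obstruction of the projective generator is not ``$u_{xxxx}$ times a polynomial'', and $\xi^2_\varepsilon\partial_t$ does not cancel anything through ``$\xi^2_\varepsilon$-weighted fourth-order terms''. Both claims contradict your own correct argument that $2(x+y)u_{xxxx}$ cannot be compensated for $\mathcal{X}_1$: if they held, $\mathcal{X}_1$ would be stabilized too.

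For $\mathcal{X}_6+2y\mathcal{X}_5$ the $u_{xxxx}$ terms cancel identically. What remains comes from $(x+y)^2D_x^4\bigl(-(x^2+2xy+2t)u\bigr)$, namely $-8(x+y)^3u_{xxx}-12(x+y)^2u_{xx}$. The deformation $\varepsilon\beta(x)\partial_t$ contributes $\beta''u_{xx}+2\beta'u_{xxx}$. Matching requires $\beta'=4(x+y)^3$, i.e.\ $\beta=(x+y)^4+C$. The choice $C=-y^4$, which amounts to adding a multiple of $\varepsilon\mathcal{X}_2$, gives the stated $x(x^3+4x^2y+6xy^2+4y^3)$. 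The remaining order-$\varepsilon$ equations are the homogeneous heat determining equations, and these produce $\varepsilon\mathcal{X}_1,\dots,\varepsilon\mathcal{X}_6$.
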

\begin{proof}
Direct use of the Lie algorithm up to order $\varepsilon$ using SYM.
\end{proof}

Once more, the Lie algebra is not stable. Due to the perturbation we loose the vectors $\mathcal{X}_1$ and $\mathcal{X}_5$, the latter, one of the three symmetries needed for inheriting the fundamental solution. So, we need a way to stabilize that vector. To accomplish that we need to consider that the deformed part of the two vectors is a contact or a  generalized symmetry, see also subsection~\ref{sec:GeneralizedSymmetries}. That is, we shall look for approximates symmetries of the form
\begin{align*}
	\mathcal{X}_1+\varepsilon\, Q(x,t,u,u_x,u_{xx},u_{xxx},\dots)\partial_u && \mathcal{X}_5+\varepsilon\, R(x,t,u,u_x,u_{xx},u_{xxx},\dots)\partial_u.
\end{align*}
By applying the Lie algorithm to this mixed type of symmetry we arrive to the following result:
\begin{theorem}
	By allowing the perturbed part of the approximate symmetries to be a Lie-Bäcklund symmetry, we obtain the following approximate symmetries of~\eqref{eq:PerturbedEq2alt}:
	\begin{align*}
		\mathcal{Z}_{1} =& \mathcal{X}_1 + \varepsilon\left(t+\frac{x^2}{2}+xy\right)u_{xxx}\partial_u,\\
		\mathcal{Z}_{2} =& \mathcal{X}_5 -\frac{\varepsilon}{2} \left[\left(6t(x+y)+x^2(x+3y)\right)u_{xx}+2t\left(2t-x^2-2xy-4y^2\right)u_{xxx}\right]\partial_u.
	\end{align*}
\end{theorem}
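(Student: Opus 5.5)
The plan is to verify each stated vector directly against the approximate symmetry condition \eqref{eq:ApproxLinSymCond}, adapted to \eqref{eq:PerturbedEq2alt}. I will work in characteristic form (subsection~\ref{sec:GeneralizedSymmetries}), because the perturbed parts are purely of the type $Q\,\partial_u$ with $Q$ depending on higher derivatives. Write $L_\varepsilon = D_t - D_x^2 + \varepsilon (x+y)^2 D_x^4$ for the linearization of $u_t-u_{xx}+\varepsilon(x+y)^2u_{xxxx}$. An approximate symmetry with characteristic $W=W_0+\varepsilon Q$ must satisfy $L_\varepsilon[W]=\mathcal{O}(\varepsilon^2)$ on solutions of \eqref{eq:PerturbedEq2alt}. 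In that computation I replace $u_t$ by $u_{xx}-\varepsilon(x+y)^2u_{xxxx}$ throughout, so that time derivatives of $u_x,u_{xx},\dots$ become pure $x$-derivatives.

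Since $W_0$ is the characteristic of a point symmetry of the heat equation \eqref{eq:Heat}, the order-zero part of $L_\varepsilon[W]$ vanishes automatically. The condition therefore splits into a residual $\varepsilon\,\rho[u]$ produced by $W_0$, together with the requirement
$$(D_t-D_x^2)Q = -\rho[u]$$
on solutions of \eqref{eq:Heat}.

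For $\mathcal{Z}_1$ the characteristic is $W_0=-u_x$. Because $L_\varepsilon$ commutes with $D_x$ except through the coefficient $(x+y)^2$, I obtain $\rho = 2(x+y)u_{xxxx}$. With the ansatz $Q=a(x,t)u_{xxx}$ one gets
$$(D_t-D_x^2)(a\,u_{xxx}) = (a_t-a_{xx})u_{xxx}-2a_xu_{xxxx}.$$
Matching coefficients forces $a_x = x+y$ and $a_t=a_{xx}=1$. Hence $a=t+\tfrac{x^2}{2}+xy$, up to an additive constant, which only adds the trivial approximate symmetry $\varepsilon\,u_{xxx}\partial_u$. This reproduces $\mathcal{Z}_1$ exactly.

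For $\mathcal{Z}_2$ the characteristic is $W_0=-xu-2tu_x$. Here the residual has two sources. The first is the term $\varepsilon(x+y)^2D_x^4W_0$. The second is the substitution of $u_t-u_{xx}=-\varepsilon(x+y)^2u_{xxxx}$ inside $(D_t-D_x^2)W_0=-x(u_t-u_{xx})-2tD_x(u_t-u_{xx})$. Collecting these gives $\rho$ as a combination of $u_{xxx},u_{xxxx},u_{xxxxx}$ with polynomial coefficients in $x,y,t$.

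I then take the ansatz $Q=b(x,t)u_{xx}+c(x,t)u_{xxx}$ and match coefficients of $u_{xxx}$ and $u_{xxxx}$; I also check that the $u_{xxxxx}$ terms cancel. The coefficient of $u_{xxxx}$ determines $c_x$. The coefficient of $u_{xxx}$ links $c_t-c_{xx}$ with $b_x$, and the coefficient of $u_{xx}$ gives $b_t=b_{xx}$. Integrating this triangular system should yield $b=-\tfrac12\bigl(6t(x+y)+x^2(x+3y)\bigr)$ and $c=-t(2t-x^2-2xy-4y^2)$, up to trivial null deformations.

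I expect the main obstacle to be the consistency of this overdetermined system for $\mathcal{Z}_2$. If the compatibility condition fails with a purely second- and third-order ansatz, I will enlarge it by including $u_x$ and $u$ terms. Any leftover freedom is a null deformation ($\varepsilon\mathcal{X}_i$, or $\varepsilon\,u_{xx}\partial_u$ and $\varepsilon\,u_{xxx}\partial_u$), which does not affect the statement. The bookkeeping can be confirmed with SYM, as in the earlier theorems.
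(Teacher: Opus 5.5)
Your proposal is correct, and it reaches the result by a somewhat different route than the paper. The paper's proof consists of running the Lie algorithm to order $\varepsilon$ in SYM on the general mixed ansatz $\kappa\mathcal{X}_1+\lambda\mathcal{X}_5+\varepsilon Q(x,t,u,u_x,u_{xx},u_{xxx})\partial_u$, so it in effect solves for all such stabilizations. You instead pass to characteristic form and isolate the residual $\rho$ generated by the unperturbed characteristic. This reduces the problem to $(D_t-D_x^2)Q=-\rho$ on heat solutions with a targeted ansatz, which can be checked by hand.

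Your computations go through:
\begin{itemize}
\item For $\mathcal{Z}_1$ you get exactly $a=t+\tfrac{x^2}{2}+xy$.
\item For $\mathcal{Z}_2$ one finds $\rho=4t(x+y)u_{xxxx}-4(x+y)^2u_{xxx}$; the fifth-order terms already cancel in $\rho$. The triangular system is then $c_x=2t(x+y)$, $c_t-c_{xx}-2b_x=4(x+y)^2$, $b_t=b_{xx}$, and the stated $b,c$ satisfy it.
\end{itemize}
So the compatibility failure you anticipate does not occur, and no $u_x$ or $u$ terms are needed.

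One bookkeeping caveat concerns your list of null deformations. Integrating the system leaves constant multiples of $u_{xx}$ and $u_{xxx}$, and also a free constant multiplying $tu_{xxx}+\tfrac{x}{2}u_{xx}$. The latter is a generalized symmetry of the heat equation, but it is not in your list. For instance, the solution with no $ty^2u_{xxx}$ term differs from the stated $\mathcal{Z}_2$ by $4y^2\bigl(tu_{xxx}+\tfrac{x}{2}u_{xx}\bigr)$. Either add this deformation to your list or simply substitute the given $b,c$ directly.

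As for what each route buys: yours gives a transparent verification that $\mathcal{Z}_1,\mathcal{Z}_2$ are approximate symmetries, which is all the statement asserts. The paper's computer-algebra run with a larger ansatz yields, in principle, the whole family of such deformations, a classification rather than a verification, at the cost of being opaque.
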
 
\begin{proof}
Direct use of the Lie algorithm up to order $\varepsilon$ for a mixed approximate symmetry of the form
$$
	\kappa \mathcal{X}_1+\lambda\mathcal{X}_5 + \varepsilon \,Q(x,t,u,u_x,u_{xx},u_{xxx})\partial_u
$$
using SYM.
\end{proof}

With the whole Lie algebra of Lie point symmetries of \eqref{eq:Heat} stabilized we can now obtain the approximate fundamental solution of equation~\eqref{eq:PerturbedEq2alt} as a perturbation of the fundamental solution of equation~\eqref{eq:Heat}.

\begin{theorem}
The approximate fundamental solution of equation~\eqref{eq:PerturbedEq2alt} is 
\begin{align*}
  &\upsilon_\varepsilon(x,t) = \frac{e^{-\dfrac{x^2}{4 t}}}{2\sqrt{\pi t}} + \varepsilon \frac{84t^3+36t^2(3x-y)y-x^4(x^2+3xy+3y^2)+3tx^2(x^2+4xy+12y^2)}{96\sqrt{\pi t^7}}e^{-\dfrac{x^2}{4 t}},
\end{align*}

and as a consequence, the approximate fundamental solution of equation~\eqref{eq:PerturbedEq2} is
 \begin{equation}\label{eq:ApproximateFundamentalSolution2}
	\begin{split}
		&G_\varepsilon(x,t;y) = \frac{e^{-\dfrac{(x-y)^2}{4 t}}}{2\sqrt{\pi t}}+ \varepsilon \frac{84t^3+36t^2(3x-4y)y-(x-y)^4(x^2+xy+y^2)+3t(x-y)^2(x^2+2xy+9y^2)}{96\sqrt{\pi t^7}}e^{-\dfrac{(x-y)^2}{4 t}}
	\end{split}
\end{equation}
\end{theorem}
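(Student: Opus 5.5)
I will follow the same route as for the approximate fundamental solution of \eqref{eq:PerturbedEq}: find the subalgebra of approximate symmetries that leaves the family of Cauchy problems \eqref{eq:FundamentalProblem2alt} invariant to order $\varepsilon$, then compute its approximate invariant solution. For the heat equation the relevant generators are $\mathcal{X}_5$, $\mathcal{X}_4-\mathcal{X}_3$ and $\mathcal{X}_6$. Their stabilized deformations for \eqref{eq:PerturbedEq2alt} are the Lie--Bäcklund symmetry $\mathcal{Z}_2$ of the preceding theorem, together with the point symmetries $\mathcal{Y}_4-\mathcal{Y}_3$ and $\mathcal{Y}_6$ of the theorem before it. These reduce to $\mathcal{X}_4-\mathcal{X}_3$ and $\mathcal{X}_6$ at $y=0$, and contain $y$-dependent terms that I will keep track of.

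I will take the general combination $\kappa\mathcal{Z}_2+\lambda(\mathcal{Y}_4-\mathcal{Y}_3)+\mu\mathcal{Y}_6$ and impose three conditions in turn.
\begin{itemize}
\item The boundary $t=0$ must be invariant. On $t=0$ the $\partial_t$ coefficient of $\mathcal{Y}_6$ is $\varepsilon x(x^3+4x^2y+6xy^2+4y^3)$, which does not vanish, so this condition forces $\mu=0$ unless it is compensated.
\item The pole $x=0$ must be invariant at $t=0$. The $\partial_x$ coefficient of $\mathcal{Y}_4$ at $x=0$ is $y$, so this condition should force $\lambda=0$ when $y\neq0$. That leaves essentially the deformation of $\mathcal{X}_5$.
\item The delta initial datum itself must be invariant. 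I will apply the rule $\mathcal{X}[\delta(x)]=-\xi^1_x(0)\delta(x)$ to the $\varepsilon$-part $-\tfrac12[x^2(x+3y)u_{xx}+0\cdot u_{xxx}]$ of $\mathcal{Z}_2$ at $t=0$, substituting $u=\delta$. Using $x^2\delta''=2\delta$, $x^3\delta''=0$, and $x\delta'=-\delta$, the result is a combination of $\delta$ and possibly $\delta'$ with $y$-dependent coefficients.
\end{itemize}
As in the previous proof, the third condition will generically fail. I will repair it by adding null deformations $\alpha\varepsilon u\partial_u+\beta\varepsilon u_x\partial_u+\gamma\varepsilon u_{xx}\partial_u$, allowing $\alpha,\beta,\gamma$ to depend on $y$, and choose them to cancel the offending $\delta,\delta',\delta''$ terms.

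Next I will compute the approximate invariant solution $\upsilon_\varepsilon=f+\varepsilon g$ of the resulting operator $\mathcal{W}=\mathcal{X}_5+\varepsilon(\dots)$.
\begin{itemize}
\item At order $0$ the condition $2tf_x+xf=0$ gives $f=\phi(t)e^{-x^2/4t}$.
\item At order $\varepsilon$ the condition is $2tg_x+xg=-(\text{$\varepsilon$-part of }\mathcal{W})$ evaluated on $f$. This is a linear first-order ODE in $x$ with $t$ as a parameter. Its solution has the form $e^{-x^2/4t}\bigl(\psi(t)+P(x,t,y)\bigr)$, where $P$ is polynomial in $x$ with coefficients rational in $t$.
\item Substituting $f+\varepsilon g$ into \eqref{eq:PerturbedEq2alt} and splitting by orders gives ODEs for $\phi$ and $\psi$. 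The order-$0$ equation gives $\phi=1/(2\sqrt{\pi t})$. The order-$\varepsilon$ equation gives $\psi(t)=Ct^{-7/2}\cdot(\text{polynomial})$, with the constant fixed by the vanishing of the $\varepsilon$-correction as $t\to0^+$ in the distributional sense (equivalently, by the requirement that $g$ carry no extra mass at $t=0$).
\end{itemize}
Finally, I will undo the translation $x\mapsto x-y$ to pass from $\upsilon_\varepsilon(x,t)$ to $G_\varepsilon(x,t;y)$, which gives \eqref{eq:ApproximateFundamentalSolution2}. This last step is a direct substitution followed by regrouping into the stated polynomial.

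I expect the main obstacle to be fixing the free function $\psi$, in particular the constant term $84t^3$ and the $y$-dependent $t^2$ terms. A single generator in $1+1$ dimensions determines $\upsilon_\varepsilon$ only up to such homogeneous pieces. The initial condition $\upsilon_\varepsilon(x,0)=\delta(x)+\varepsilon\cdot 0$ must be interpreted carefully: the $t^{-7/2}$-type terms tend weakly to combinations of $\delta^{(k)}$, and these must cancel. If that cancellation proves awkward, I will instead cross-check the order-$\varepsilon$ term through Duhamel's formula, $g=-\int_0^t\upsilon(\cdot,t-s)*\bigl((x+y)^2\upsilon_{xxxx}(\cdot,s)\bigr)\,ds$, which pins down all constants.
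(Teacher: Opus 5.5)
Your proposal is correct and follows the paper's proof essentially step by step. The combination $\kappa\mathcal{Z}_2+\lambda(\mathcal{Y}_4-\mathcal{X}_3)+\mu\mathcal{Y}_6$ is cut down to $\kappa\mathcal{Z}_2$ by the boundary and pole conditions, and the $\delta$-condition produces only the term $-3y\varepsilon\delta$, so your repair gives $\alpha=3y$, $\beta=\gamma=0$, i.e.\ the paper's vector $\mathcal{Z}_2+3y\,\varepsilon\mathcal{X}_3$; the invariant solution is then pinned down by substituting into the Cauchy problem and shifting $x\mapsto x-y$. Your explicit zero-mass (weak-limit) condition fixes the $84t^3$ term, which is exactly $\tfrac74$ times the heat kernel and the only piece the PDE cannot determine, and so makes precise what the paper leaves implicit; one sign needs correcting: the order-$\varepsilon$ invariance condition is $2tg_x+xg=+S[f]$, with $S$ the $\partial_u$-coefficient of the $\varepsilon$-part of $\mathcal{W}$, not its negative.
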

\begin{proof}
	We need to find the Lie subalgebra of approximate symmetries that keeps invariant the family of Cauchy problems~\eqref{eq:FundamentalProblem2alt} up to order $\varepsilon$.
We know that the Lie subalgebra spanned by the vectors
\begin{align*}
	\mathcal{X}_5, && \mathcal{X}_4 - \mathcal{X}_3, && \mathcal{X}_6,
\end{align*}
keeps invariant the Cauchy problem \eqref{eq:FundamentalProblem}, so we can start our analysis from its deformation spanned by 
\begin{align*}
	\mathcal{Z}_2, && \mathcal{Y}_4 - \mathcal{X}_3, && \mathcal{Y}_6.
\end{align*}
Next, we need to find the Lie subalgebra that satisfies the initial condition, that is, we need all the vectors that keep invariant the border $t=0$, for $t=0$ the symmetries should keep the pole of the Dirac function $x=0$ invariant, and finally they should keep invariant the condition itself $\upsilon_\varepsilon(x,0) = \delta(x)+\varepsilon\cdot0$. So, let the linear combination of vectors $\kappa \mathcal{Z}_2+ \lambda\left(\mathcal{Y}_4 - \mathcal{X}_3\right)+\mu\mathcal{Y}_6$, that is,
\begin{equation*}
\begin{split}
 	&(2\kappa t+(\lambda+\mu t)(x+y))\partial_x+2(\lambda+2\mu t)t\partial_t-(\kappa x+\lambda+\mu(2t+x(x+2y)))u\partial_u+\varepsilon\Bigl(\mu x(x^3+4x^2y+6xy^2+4y^3)\partial_t\\
  &\qquad-\frac{\kappa}{2}\left[\left(6t(x+y)+x^2(x+3y)\right)u_{xx}+2t\left(2t-x^2-2xy-4y^2\right)u_{xxx}\right]\partial_u\Bigr).
\end{split}
\end{equation*}
By simple inspection, we can see that it leaves invariant the first condition if, and only if, $\mu=0$, while the second condition is satisfied if, and only if, $\lambda=0$. As for the last condition, by taking into account that
\begin{align*}
  	\left(\xi^1(x,t,u,\dots)\frac{\partial }{\partial x}+\xi^2(x,t,u,\dots)\frac{\partial }{\partial t} + \eta(x,t,u,\dots) \frac{\partial }{\partial u}\right)[\delta(x-x_0)] =- \left. \frac{\partial}{\partial x}\xi_1(x,t,u,\dots) \right\rvert_{x=x_0}\delta(x-x_0),
\end{align*}
we arrive at the restriction 
\begin{align*}
  -\kappa x \delta(x) -\frac{\kappa}{2}\varepsilon x^2(x+3y)\delta^{\prime\prime}(x) = 0\overset{x^3\delta^{\prime\prime}(x)=0}{\implies}-\frac{3\kappa}{2}yx^2\delta^{\prime\prime}(x) =0\implies -3\kappa y\delta(x)=0.
\end{align*}
This condition is satisfied only when $\kappa=0$. As in the previous section, we shall use one of the null deformations  to rectify the problem. In this case the situation is quite obvious, we need to employ the approximate symmetry $\varepsilon u\partial_u$. Indeed, a straightforward verification shows that the vector $\mathcal{Z}_2 + 3y\mathcal{X}_3$ satisfies the initial condition!

Thus, the function $\upsilon_\varepsilon$ is the invariant solution of that vector, that is,
$$
\left(\mathcal{Z}_2 + 3y\mathcal{X}_3\right)\left[u-\upsilon_\varepsilon(x,t)\right]_{u=\upsilon_\varepsilon(x,t)}=\mathcal{o}(\varepsilon)
$$
where
$$
	\upsilon_\varepsilon(x,t) = f(x,t)+\varepsilon g(x,t).
$$
By solving the resulting system --- the same way we did before --- we arrive at the approximate fundamental solution that we are looking for:
\begin{align*}
  &\upsilon_\varepsilon(x,t) = \frac{e^{-\dfrac{x^2}{4 t}}}{2\sqrt{\pi t}}+ \varepsilon \frac{84t^3+36t^2(3x-y)y-x^4(x^2+3xy+3y^2)+3tx^2(x^2+4xy+12y^2)}{96\sqrt{\pi t^7}}e^{-\dfrac{x^2}{4 t}}.
\end{align*}
Finally, by using the mapping $x\mapsto x-y$ we obtain the approximate fundamental solution of \eqref{eq:PerturbedEq2}:
\begin{align*}
    &G_\varepsilon(x,t;y) = \frac{e^{-\dfrac{(x-y)^2}{4 t}}}{2\sqrt{\pi t}}+ \varepsilon \frac{84t^3+36t^2(3x-4y)y-(x-y)^4(x^2+xy+y^2)+3t(x-y)^2(x^2+2xy+9y^2)}{96\sqrt{\pi t^7}}e^{-\dfrac{(x-y)^2}{4 t}}.
\end{align*}
\end{proof}
 In figure~\ref{fig:ComparisonOfDiffusions2} we give a qualitative, and out of scale, comparison of the two diffusive processes for $y=0$. All other values of $y$ corresponding to translations of these graphs.  

\begin{figure}[H]
	\centering
	\includegraphics[scale=0.42]{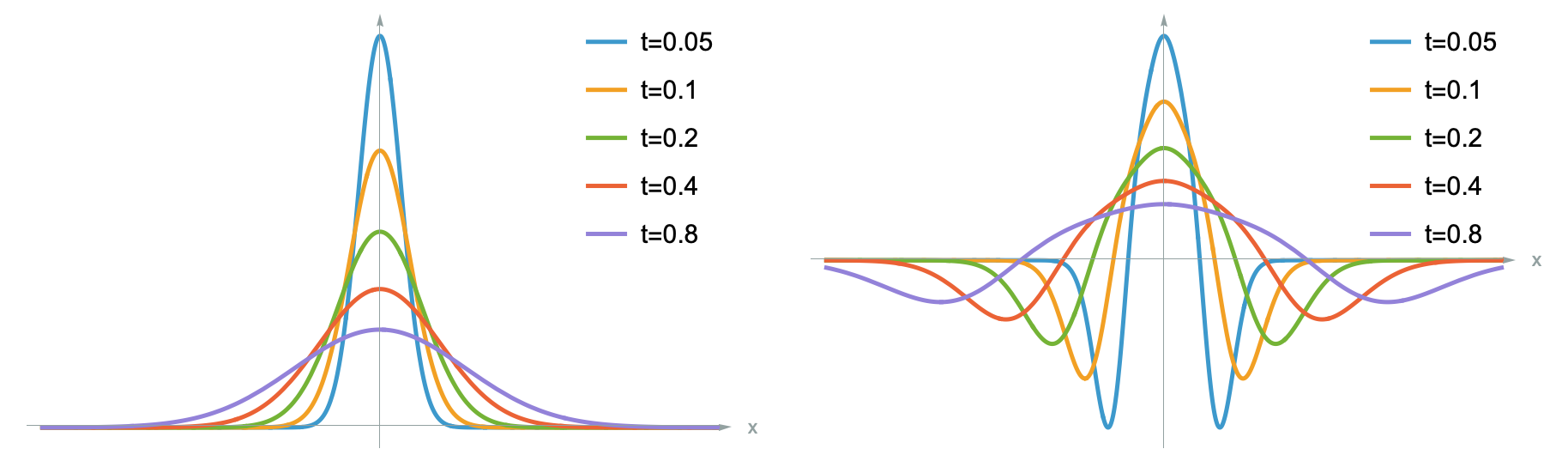}
	\caption{Qualitative comparison of the primary (left) and the secondary (right) diffusion for different values of $t$ and $y=0$.}
	\label{fig:ComparisonOfDiffusions2}
\end{figure}

As for the Cauchy problem of the form
$$
\begin{cases}
	u_t = u_{xx}-\varepsilon x^2 u_{xxxx},\ -\infty<x<\infty,\, t>0,\\
	u(x,0) = f(x)+\varepsilon g(x), \ -\infty<x<\infty,
\end{cases}	
$$
its approximate solution is
\begin{equation}\label{eq:AppSolOfCauchyProblem2}
\begin{split}
	&u(x,t) = G_\varepsilon(x,t;y)*(f(y)+\varepsilon g(y))= \int\limits_{-\infty}^\infty \frac{e^{-\dfrac{(x-\xi)^2}{4 t}}}{2\sqrt{\pi t}} f(\xi)\,\mathrm{d}\xi\\
	&+ \varepsilon\int\limits_{-\infty}^\infty \left(\frac{e^{-\dfrac{(x-\xi)^2}{4 t}}}{2\sqrt{\pi t}}g(\xi)+ \frac{84t^3+36t^2(3x-4\xi)\xi-(x-\xi)^4(x^2+\xi x+\xi^2)+3t(x-\xi)^2(x^2+2\xi x+9\xi^2)}{96\sqrt{\pi t^7}}e^{-\dfrac{(x-\xi)^2}{4 t}}f(\xi)\right)\,\mathrm{d}\xi.	
\end{split}
\end{equation}

\section{Application and numerical comparison}

Suppose that $f(x)=e^{-x^2}$ and $g(x)=0$. The two approximate solutions found, \eqref{eq:AppSolOfCauchyProblem1} and  \eqref{eq:AppSolOfCauchyProblem2}, take the particular forms  
$$
	u_1(x,t) = \frac{e^{-\dfrac{x^2}{1+4t}}}{\sqrt{1+4t}}-\varepsilon \frac{4e^{-\dfrac{x^2}{1+4t}}t\left(3+48t^2-12x^2+4x^4+24t(1-2x^2)\right)}{\sqrt{(1+4t)^9}}
$$
and 
\begin{align*}
  u_2(x,t) =& \frac{e^{-\dfrac{x^2}{1+4t}}}{\sqrt{1+4t}}\\
  &+\varepsilon \frac{4e^{-\dfrac{x^2}{1+4t}}t\left(12 (4 t+1) (t (4 t+3)+3) x^4+9 (16 t-1) (4 t x+x)^2+3 t (4 t+1)^3 (28 t-3)-4 (4 t (4 t+3)+3) x^6\right)}{3\sqrt{(1+4t)^{13}}},
\end{align*}
respectively. In figures~\ref{fig:ComparisonOfDiffusions3} and \ref{fig:ComparisonOfDiffusions4} we give a qualitative, and out of scale, comparison of the evolution in time of the two diffusive processes for the two particular solutions just given. In figure~\ref{fig:RetentionEffect} we visualize the retention effect of the secondary flux for both solutions.

\begin{figure}[H]
	\centering
	\includegraphics[scale=0.39]{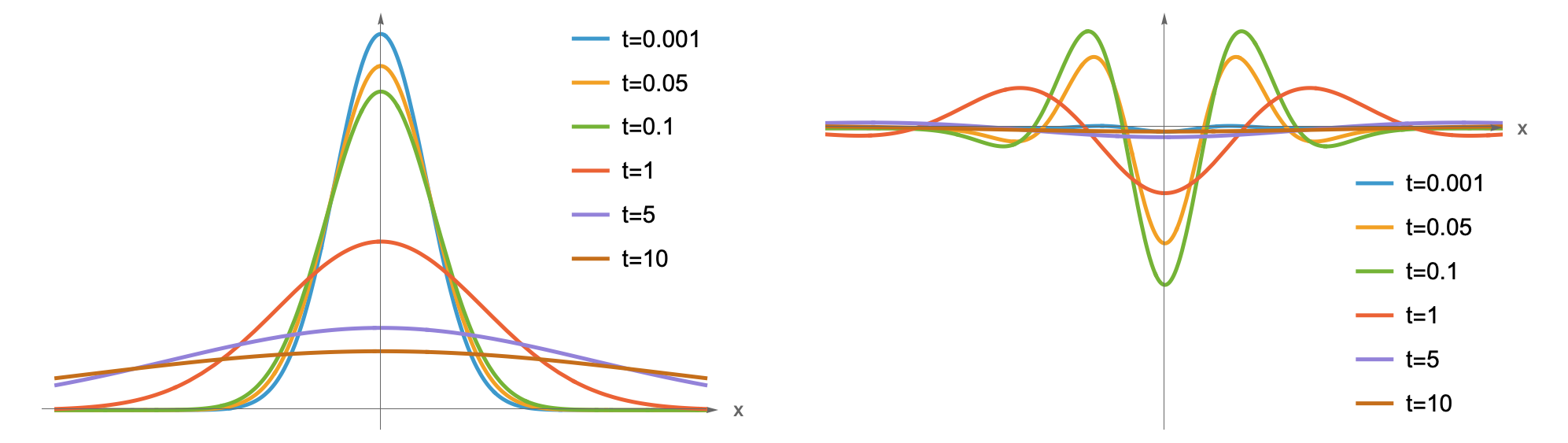}
	\caption{Qualitative comparison of the evolution in time of the primary (left) and secondary (right) diffusion processes.}
	\label{fig:ComparisonOfDiffusions3}
\end{figure}

\begin{figure}[H]
	\centering
	\includegraphics[scale=0.25]{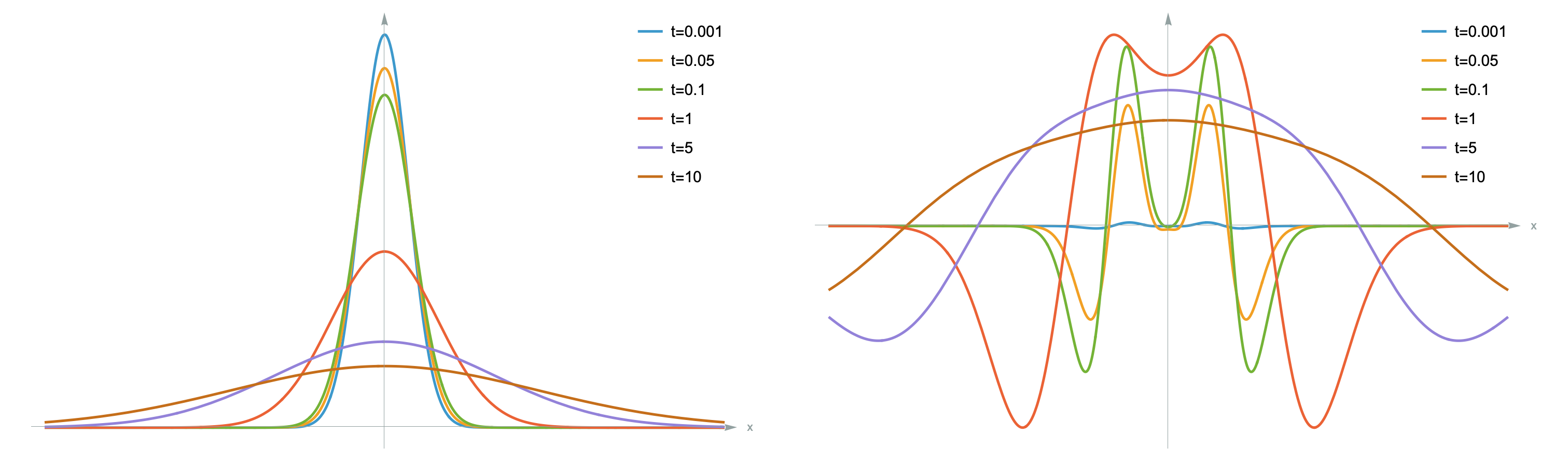}
	\caption{Qualitative comparison of the evolution in time of the primary (left) and secondary (right) diffusion processes.}
	\label{fig:ComparisonOfDiffusions4}
\end{figure}

\begin{figure}[H]
	\centering
	\includegraphics[scale=.4]{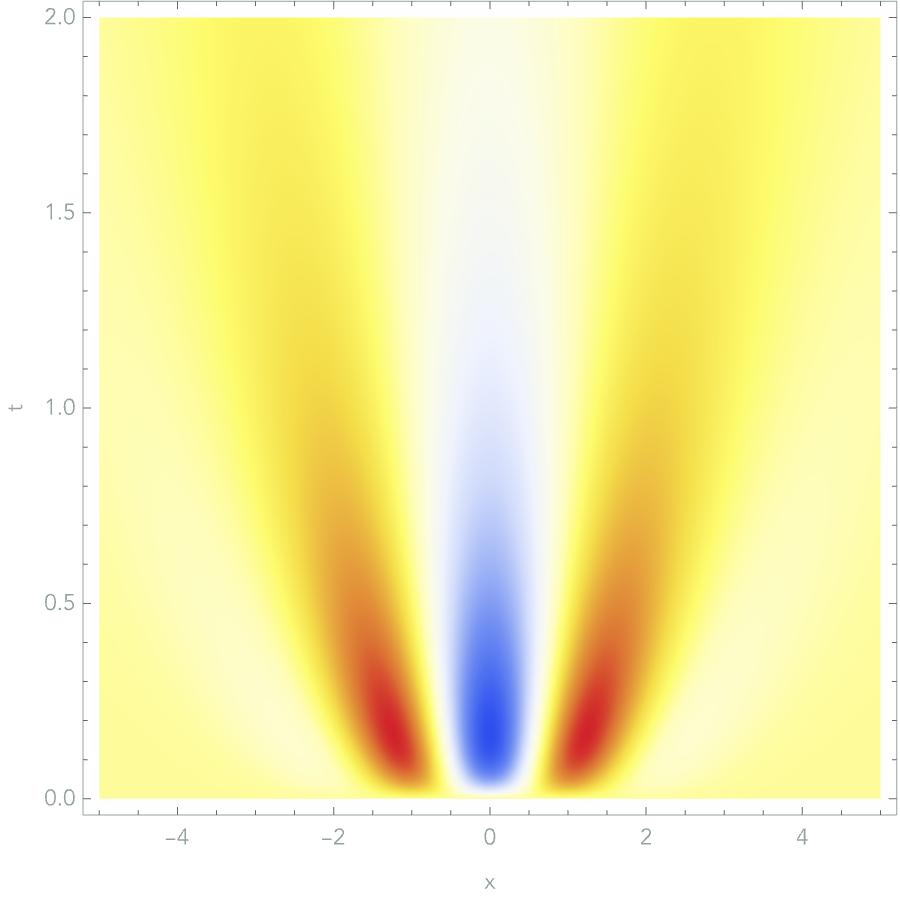}\hfill\includegraphics[scale=.4]{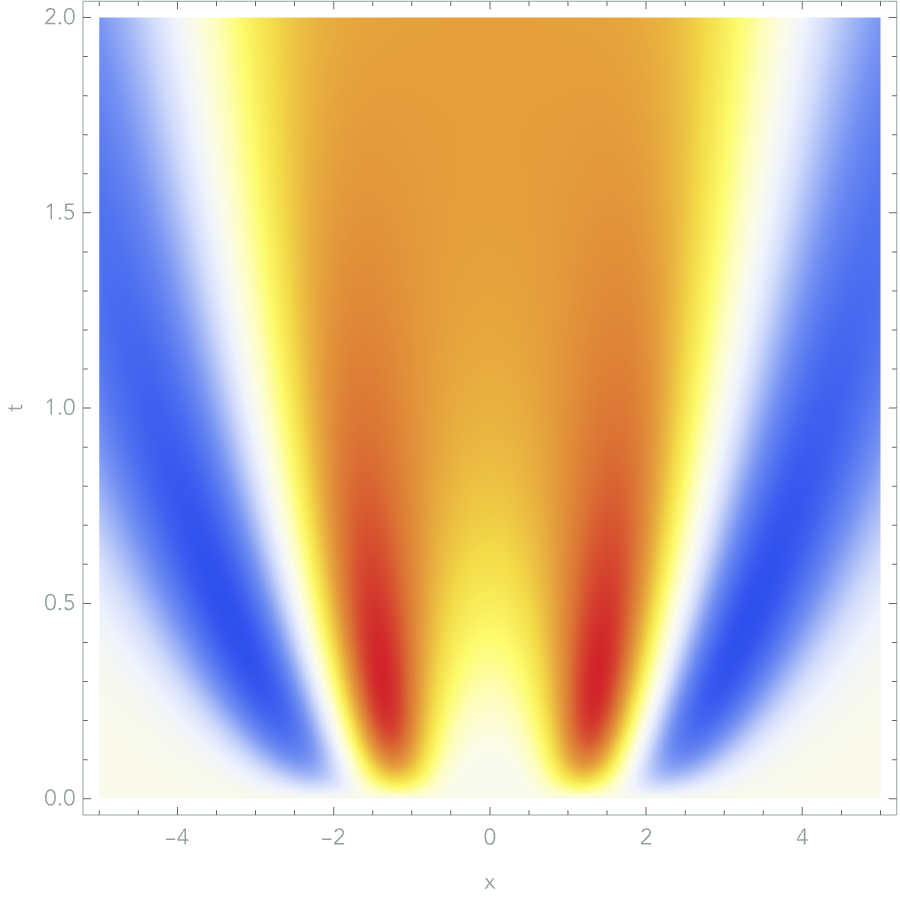}
	\caption{The retention effect of the secondary flux on $u_1$ (left) and $u_2$ (right). Red corresponds to higher retention and blue to lower one.}
	\label{fig:RetentionEffect}
\end{figure}

In figures ~\ref{fig:NumericalComparison1} and \ref{fig:NumericalComparison2} we give an error estimate of each one of the two approximate solutions found against their numerical counterpart using the $L^2$ norm,
$
	\displaystyle \lVert\cdot \rVert_{L^2} = \sqrt{\int\limits_{-5}^{5}\lvert\cdot\rvert^2\,\mathrm{d}x},
$
for different values of time. For obtaining each one of the numerical solutions we employed the Mathematica's function \href{https://reference.wolfram.com/language/ref/ParametricNDSolve.html}{ParametricNDSolve} with periodic boundary conditions 
\begin{align*}
	 u(-10,t)=u(10,t),&&  u_x(-10,t)=u_x(10,t)
\end{align*}
and options \emph{Method -> \{"MethodOfLines", "SpatialDiscretization" -> \{"TensorProductGrid", "DifferenceOrder" -> "Pseudospectral","MaxStepSize" -> 0.01\}}. From these figures we can deduce that the error remains most of the time in a range of order $\varepsilon^2$. 

\begin{figure}[H]
	\centering
	\includegraphics[scale=0.44]{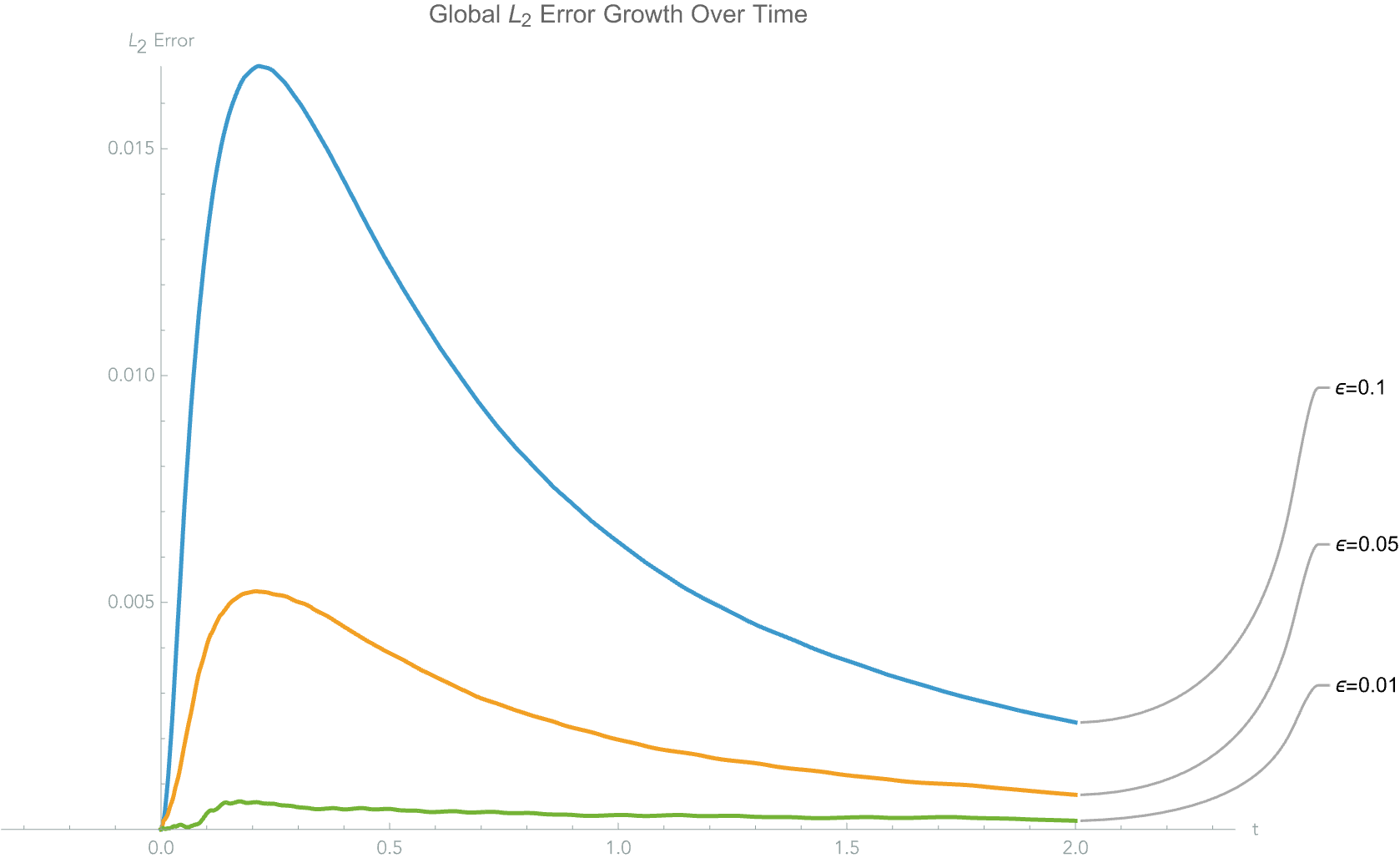}
	\caption{Error growth for various values of $\varepsilon$ and $0\le t\le 2$.}
	\label{fig:NumericalComparison1}
\end{figure}

\begin{figure}[H]
	\centering
	\includegraphics[scale=0.44]{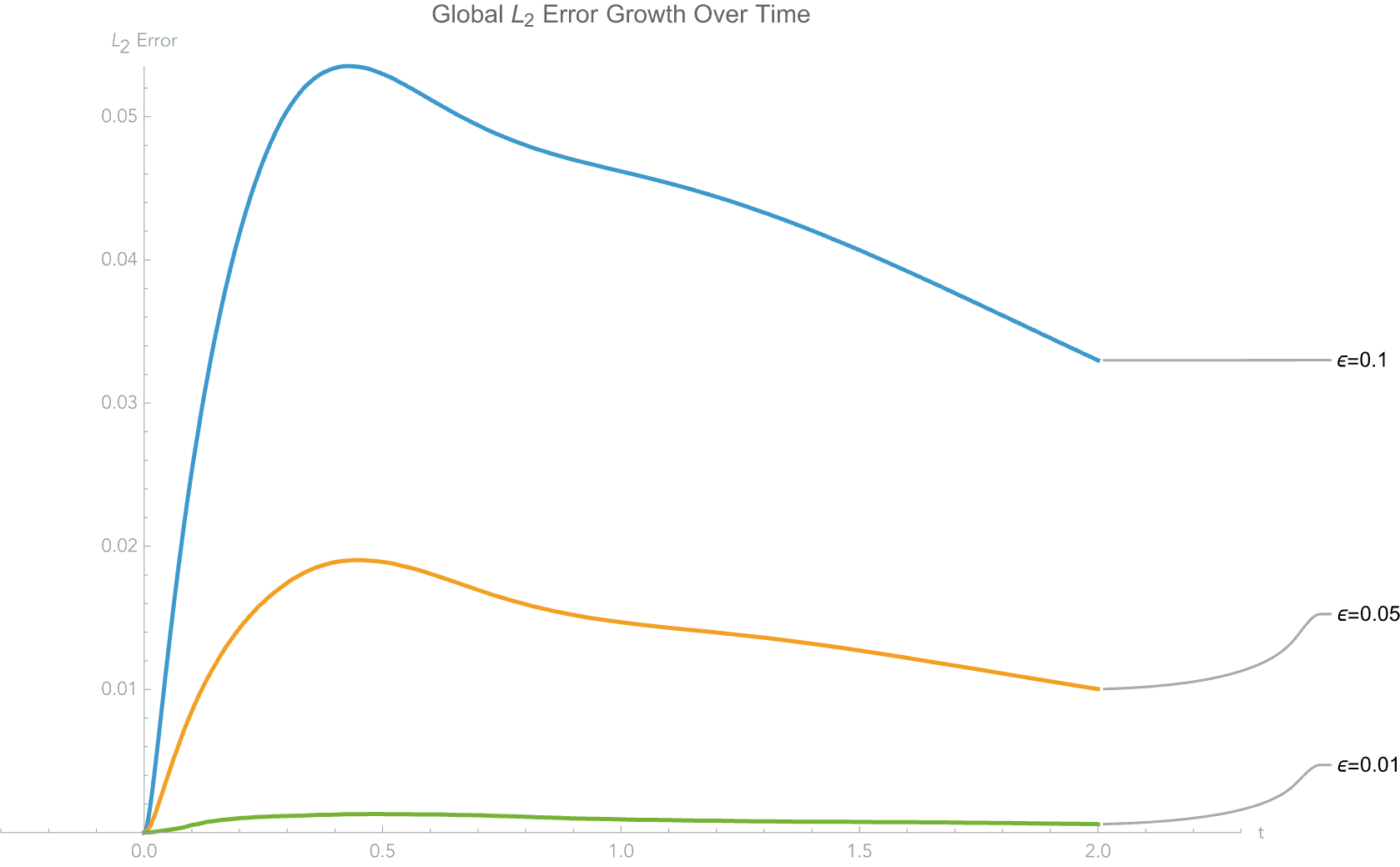}
	\caption{Error growth for various values of $\varepsilon$ and $0\le t\le 2$.}
	\label{fig:NumericalComparison2}
\end{figure}

\section{Concluding remarks}

In this work we started with  the governing equation for diffusion with convection,
$$
	p_t(\mathbf x,t) = \nabla\cdot\left(\beta\mathbf D\cdot\nabla p(\mathbf x,t)\right)-\nabla\cdot\left(\beta(1-\beta )\mathbf R\nabla\cdot\left(\nabla\left(\mathbf C\cdot\nabla p(\mathbf x,t)\right)\right)\right),
$$
and reduced it to its  bare essentials: an equation modelling only two fluxes --- Fickian diffusion and  retention-induced diffusion,
$$
u_t=u_{xx} - rc(x)\,u_{xxxx}.
$$
Our goal was to treat this as a (singular) perturbation of the heat equation, separating the two diffusion processes and expressing them in closed form. To do so, we applied advanced tools from modern group analysis, that is, approximate symmetries and generalized (Lie-Bäcklund) symmetries, instead of adopting a more traditional approach, for instance Fourier transforms. 

Symmetries as general-purpose tools they may initially underperform compared to specialized methods ---  especially when restricted to the simplest, and most known, kind of symmetries, the Lie point ones. This explains the common view in the literature that  symmetry-based  solutions apply only to limiting cases, where dynamical systems evolve toward  more symmetric states. Our main motivation was to provide a blueprint showing how more sophisticated kinds of symmetries can address problems that might seem out of reach for Sophus Lie's brilliant idea. Indeed, this became apparent when we apply the same process to a not so tractable by the Fourier Transform PDE:
$$
u_t=u_{xx} - \varepsilon\, x^2u_{xxxx}.
$$
To the best of our knowledge, this is the first time that such kind of analysis is performed for this type of anomalous diffusion models. Unlike previous studies that rely on numerical schemes or integral representations, our symmetry-based approach yielded an explicit analytical expression that separates the primary Fickian flux from the emergent secondary flux. Moreover, this is the first time that the two specific stabilizations of the Lie algebra of the heat equation are reported. In future works, we intent to extend this process to more complicate, and more realistic, forms of the governing equation for diffusion with retention.

\section*{CRediT authorship contribution statement}
\noindent{\bf Y.B.:} Funding Acquisition, Validation (supporting), Writing -- Original Draft (supporting), Writing -- Review \& Editing (equal). {\bf S.D.:} Conceptualization, Methodology, Validation (lead), Formal Analysis, Investigation, Visualization, Writing -- Original Draft (lead), Writing -- Review \& Editing (equal). {\bf A.J.S.N.:} Model Supervision, Writing--Review \& Editing (equal). All authors reviewed and approved the final manuscript.

\section*{Declaration of Competing Interest}
The authors declare that they have no known competing financial interests or personal relationships that could have appeared to influence the work reported in this paper.

\section*{Acknowledgements}

Y.~Bozhkov and S.~Dimas gratefully acknowledge support from the São Paulo Research Foundation-FAPESP, Grant No. 2024/02615-7. A.~J. Silva Neto thanks CAPES, CNPq and FAPERJ for their support. The readability of the text was further improved with the help of the Perplexity AI. 

\printbibliography

@article{JiaBevNet2018a,
	author = {Jiang, Maosheng and Bevilacqua, Luiz and Neto, Antonio J Silva and Gale{\~a}o, Augusto CN Rodrigues and Zhu, Jiang},
	doi = {10.1016/j.apm.2018.07.022},
	journal = {Applied Mathematical Modelling},
	pages = {121--134},
	publisher = {Elsevier},
	title = {Bi-flux theory applied to the dispersion of particles in anisotropic substratum},
	volume = {64},
	year = {2018}}

@article{BevJiaSil2016a,
	author = {Bevilacqua, L and Jiang, M and Silva Neto, A and Gale{\~a}o, ACRN},
	doi = {10.1007/s40430-015-0475-5},
	journal = {Journal of the Brazilian Society of Mechanical Sciences and Engineering},
	number = {5},
	pages = {1421--1432},
	publisher = {Springer},
	title = {An evolutionary model of bi-flux diffusion processes},
	volume = {38},
	year = {2016}}

@article{BevGalCos2011b,
	author = {Bevilacqua, Luiz and Gale{\~a}o, Augusto CNR and Costa, Flavio P},
	doi = {10.1590/S1678-58782011000200007},
	journal = {Journal of the Brazilian Society of Mechanical Sciences and Engineering},
	pages = {166--175},
	publisher = {SciELO Brasil},
	title = {On the significance of higher order differential terms in diffusion processes},
	volume = {33},
	year = {2011}}

@article{BevGalCos2011a,
	author = {Bevilacqua, Luiz and Gale{\~a}o, Augusto CNR and Costa, Flavio P},
	doi = {10.1590/S0001-37652011005000033},
	journal = {Anais da Academia Brasileira de Ci{\^e}ncias},
	pages = {1443--1464},
	publisher = {SciELO Brasil},
	title = {A new analytical formulation of retention effects on particle diffusion processes},
	volume = {83},
	year = {2011}}

@book{Ibr2009b,
	author = {Ibragimov, Nail H},
	doi = { 10.1142/7573},
	publisher = {World Scientific},
	title = {Practical Course in Differential Equations and Mathematical Modelling, A: Classical and New Methods. Nonlinear Mathematical Models. Symmetry and Invariance Principles},
	year = {2009}}

@article{TarChe2021a,
	author = {Tarayrah, Mahmood R and Cheviakov, Alexei F},
	doi = { 10.3390/sym13091612},
	journal = {Symmetry},
	number = {9},
	pages = {1612},
	publisher = {MDPI},
	title = {Relationship between unstable point symmetries and higher-order approximate symmetries of differential equations with a small parameter},
	volume = {13},
	year = {2021}}

@book{Ibr2008a,
	address = {Blekinge Institute of Technology, Karlskrona, Sweden},
	author = {Ibragimov, N. H.},
	publisher = {ALGA publications},
	title = {Approximate {T}ransformation {G}roups},
	year = {2008}}

@misc{MathematicaV14,
	author = {Inc., Wolfram Research{,}},
	note = {Champaign, IL, 2024},
	title = {Mathematica, {V}ersion 14.x},
	url = {https://www.wolfram.com/mathematica}}

@book{Ibra95c,
	address = {Boca Raton, Fl.},
	author = {Ibragimov, N. H.},
	doi = {10.1201/9781003419808},
	edition = {1$^{st}$ edition},
	publisher = {CRC Press},
	title = {CRC Handbook of Lie Group Analysis of Differential Equations},
	volume = {1 -- 3},
	year = {1995}}

@book{Olver2k,
	address = {New York},
	author = {Olver, P. J.},
	doi = {10.1007/978-1-4612-4350-2},
	edition = {2$^{nd}$ edition},
	publisher = {Springer},
	series = {Graduate Texts in Mathematics},
	title = {Applications of Lie Groups to Differential Equations},
	volume = {107},
	year = {2000}}

@book{Hydon2k,
	address = {Cambridge},
	author = {Hydon, P. E.},
	doi = { 10.1017/CBO9780511623967},
	edition = {1$^{st}$ edition},
	publisher = {Cambridge University Press},
	series = {Cambridge Texts in Applied Mathematics},
	title = {Symmetry Methods for Differential Equations},
	year = {2000}}

@book{Ste90,
	address = {Cambridge},
	author = {Stephani, H.},
	doi = { 10.1017/CBO9780511599941},
	edition = {1$^{st}$ edition},
	note = {Editor: MacCallum, Malcolm},
	publisher = {Cambridge University Press},
	title = {Differential Equations: Their Solution Using Symmetries},
	year = {1990}}

@book{Ibra85a,
	author = {Ibragimov, N. H.},
	publisher = {D. Reidel Publishing Co.},
	title = {Transformation groups applied to mathematical physics (Translated from the Russian Mathematics and its Applications (Soviet Series))},
	year = {1985}}

@book{Ovsi82,
	author = {Ovsiannikov, L. V.},
	doi = {10.1016/C2013-0-07470-1},
	edition = {1$^{st}$ edition},
	month = {06},
	note = {432 pages},
	publisher = {Academic Press},
	title = {Group Analysis of Differential Equations},
	year = {1982}}

@book{BluKu89,
	address = {New York},
	author = {Bluman, G. W. and Kumei, S.},
	publisher = {Springer},
	title = {Symmetries and differential equations},
	year = {1989}}

@inproceedings{DiTs2k5a,
	address = {Nicosia},
	author = {Dimas, S. and Tsoubelis, D.},
	booktitle = {The 10$^{th}$ International Conference in {MO}dern {GR}oup {AN}alysis},
	editor = {Ibragimov, N.H. and Sophocleous, C. and Damianou, P.A.},
	pages = {64-70},
	publisher = {University of Cyprus},
	title = {{SYM}: A new symmetry-finding package for {M}athematica},
	year = {2005}}

\end{document}